\documentclass{article}
\usepackage{amsfonts}
\usepackage{amsmath}
\usepackage{float} 
\usepackage{booktabs}
\usepackage{float} 

\newtheorem{theorem}{Theorem}

\newtheorem{corollary}[theorem]{Corollary}

\newtheorem{example}[theorem]{Example}

\newtheorem{lemma}[theorem]{Lemma}

\newtheorem{proposition}[theorem]{Proposition}
\newtheorem{remark}[theorem]{Remark}

\newenvironment{proof}[1][Proof]{\noindent\textbf{#1.} }{\ \rule{0.5em}{0.5em}}

\begin{document}

\title{Harmonic numbers and Lerch functions behind falling factorial generating functions}
\author{Hemen Dutta, Gradimir V. Milovanovi\'c, Yilmaz Simsek 
\\Department of Mathematics,
	Gauhati University, \\Guwahati-781014, India\\hemen\_dutta08@rediffmail.com\\ Serbian Academy of Sciences and Arts \\ Kneza Mihaila 35, 11000 Beograd, Serbia $\&$ \\ University of Ni\v s, Faculty of Sciences and Mathematics,\\ 18000 Ni\v s, Serbia\\gvm@mi.sanu.ac.rs\\
Department of Mathematics, Faculty of Science,\\ University of Akdeniz,Antalya, Türkiye\\ ysimsek@akdeniz.edu.tr}

\maketitle

\begin{abstract}
We study a family of special numbers and polynomials introduced by Simsek
whose exponential generating function has as denominator a falling factorial
in the exponential variable. Using the partial fraction decomposition of
this denominator, each member of the family is expressed by a single finite
sum involving Bernoulli polynomials, Apostol--Bernoulli polynomials and
two--variable Fubini polynomials evaluated at reciprocals of integers. A
second expansion, around the point where the denominator vanishes, expresses
the same numbers through Stirling numbers of the second kind and generalized
harmonic numbers, and yields closed formulas for the first few members. We
construct a function, built from the Hurwitz zeta function and Lerch's
transcendent, which interpolates the polynomials at the negative integers and
gives a closed form of a multiple series recently introduced by Pokaz,
Ribi\v{c}i\'{c} Penava and Simsek; we give an Euler type integral
representation for it, and show that the numbers are signed combinations of
moments of geometric random variables. Recurrence
relations, asymptotics and an extension to arbitrary distinct nodes are also
given.
\textbf{Keywords.} Generating function, Bernoulli polynomials, Apostol--Bernoulli polynomials, Fubini polynomials, Stirling numbers, harmonic numbers, Lerch zeta function, geometric distribution

\textbf{MSC.2020} 05A15, 11B68, 11B73, 11M35, 33B15

\end{abstract}

\section{Introduction}

Generating functions under the Mellin transformation are commonly applied in theory of zeta functions and the Derichlet series and also approximation theory.  Generating functions have many useful
applications in various areas  including mathematics, probability, physics, etc. Our paper is normally motivated by the work of Simsek \cite{Simsek2021,SimsekSurvey}, which constructed exponential generating functions have as denominator a falling factorial in the exponential variable. By using these functions, Simsek defined the numbers $\ell
_{n}(\nu )$ and the polynomials 
$\ell_{n}(x;\nu )$. 

Recently, Pokaz, Ribi\v{c}i\'{c} Penava and Simsek \cite{Peneva1} gave the interpolation function for the polynomials $\ell_{n}(x;\nu )$. They showed that this function is a linear combination of the Hurwitz zeta function and the Lerch's transcendent. Moreover, they left open the problem of expressing this interpolation function in terms of these two functions and determining the necessary coefficients.

The goal of this paper is to not only derive many novel formulas and identities involving Bernoulli polynomials, Apostol--Bernoulli polynomials and
two--variable Fubini polynomials, the Harmonic numbers, the Stirling numbers of the second kind, the Bell numbers, the Frobenius--Euler polynomials etc., but also construct a function in terms of the Hurwitz zeta function and Lerch's transcendent, which interpolates the polynomials at the negative integers and gives a closed form of a multiple series recently given by Pokaz, Ribi\v{c}i\'{c} Penava and Simsek \cite{Peneva1} by using  generating functions and  partial fraction decomposition of this denominator methods. Another important motivation of this article is to not only extend the exponential generating functions and polynomials introduced by Simsek to arbitrary nodes, but also  derive new recurrence relations and asymptotic formulas.
\section{Preliminaries}
Throughout, $\mathbb{N}=\{1,2,3,\dots \}$, $\mathbb{N}_{0}=\mathbb{N}\cup
\{0\}$, and for $n\in \mathbb{N}_{0}$, $\nu \in \mathbb{N}_{0}$ we write 
\[
(n)_{\nu }=n(n-1)\cdots (n-\nu +1),\qquad (n)_{0}=1,
\]
for the falling factorial, so that $(n)_{\nu }=0$ when $\nu >n$ and 
\[
(n)_{\nu }=n!/(n-\nu)!
\]
otherwise. The same notation is used for a complex argument,
\[
(x)_{\nu }=x(x-1)\cdots (x-\nu +1).
\]

In \cite{Simsek2021} (see also \cite{SimsekSurvey}) the numbers $\ell
_{n}(\nu )=\ell _{n}((0,1,\dots ,\nu -1))$ and the polynomials 
$\ell_{n}(x;\nu )$ were defined, for $\nu \in \mathbb{N}$, by 
\begin{equation}\label{def-ell}
	G_{\nu }(w;x):=\frac{w^{\nu }e^{xw}}{\prod_{j=0}^{\nu -1}\left(
		e^{w}-j\right) }=\sum_{n=0}^{\infty }\ell _{n}(x;\nu )\frac{w^{n}}{n!},\qquad \ell _{n}(\nu ):=\ell _{n}(0;\nu ).  
\end{equation} 
In \cite{SimsekSurvey} the author observes that $G_{\nu }(w;x)$ (\ref{def-ell}) 
\[
\frac{w^{\nu }}{\prod_{j=0}^{\nu -1}(e^{w}-j)}=\frac{1}{(\nu -1)!}\cdot 
\frac{w}{e^{w}}\cdot \frac{w}{e^{w}-1}\cdot \frac{w}{\frac{1}{2}e^{w}-1} 
\cdots \frac{w}{\frac{1}{\nu -1}e^{w}-1},
\] 
i.e., the generating function is a product of the generating functions of
the Bernoulli numbers and of the Apostol--Bernoulli numbers $\mathcal{B}_{n}(\beta )$
with $\beta =1/j$, and remarks that further computational formulas can be
derived from this factorization. An explicit relation given there is the
one--step recursion 
\begin{equation}\label{simsek-raise}
	\ell _{n}(7)=\frac{1}{6}\sum_{k=0}^{n}\binom{n}{k}\mathcal{B}_{k}\!\left( \frac{1}{6}%
	\right) \ell _{n-k}(6),  
\end{equation}%
whose iteration produces a $(\nu -1)$--fold convolution, which is not
convenient for computation and does not display the structure of the numbers.

Simsek \cite{Simsek2021,SimsekSurvey} showed that special values of the
polynomials $\ell _{n}(x;\nu )$ reduce to the Frobenius--Euler polynomials,
the Apostol--Bernoulli polynomials, the Euler polynomials and the Bernoulli
polynomials. The polynomials $\ell _{n}(x;\nu )$ have been called the
\textit{Frobenius--Euler type Simsek polynomials} by Agyuz \cite{AgyuzFilomat};
they have been studied further by Rao et al.\ \cite{Rao,Rao2}, Kilar \cite%
{KilarMontes} and Ribi\v{c}i\'{c} Penava, Pokaz and Simsek \cite%
{Peneva,Peneva1}, and related families were investigated by Kucukoglu \cite%
{IK,KucukogluMTJPAM}.

The purpose of the present work is to replace the product structure by an
additive one. The starting point is the observation that the denominator in %
\eqref{def-ell} is a falling factorial in the variable $u=e^{w}$, 
\begin{equation*}
	\prod_{j=0}^{\nu -1}(e^{w}-j)=(e^{w})_{\nu },
\end{equation*}%
and that ${1}/{(u)_{\nu }}$ has a partial fraction decomposition with
simple poles at $u=0,1,\ldots ,\nu -1$. Each of the resulting pieces $
e^{xw}/(e^{w}-j)$ is a generating function of a classical family: for $j=1$
the Bernoulli polynomials, for $j\neq 1$ the Fubini (or geometric, or
ordered Bell) polynomials, in the form studied by Tanny \cite{Tanny},
Boyadzhiev \cite{Boyadzhiev} and Kilar and Simsek \cite{KilarSimsek2017}. One
thereby gets $\ell _{n}(x;\nu )$ as a single finite sum (Theorem \ref{thm:pf}%
). A second, independent expansion of ${1}/{(u)_{\nu }}$ around $u=1$
leads to Stirling numbers of the second kind and to the generalized harmonic
numbers 
\begin{equation*}
	H_{m}^{(r)}=\sum_{i=1}^{m}\frac{1}{i^{r}},\qquad H_{m}:=H_{m}^{(1)},\qquad
	H_{0}^{(r)}=0,
\end{equation*}%
and it is this second expansion which accounts for several closed forms, for
instance (Corollary \ref{cor:small}) 
\begin{equation*}
	\ell _{\nu }(\nu )=(-1)^{\nu }\nu (\nu -1)\left( H_{\nu -2}-\frac{3}{2}%
	\right) ,\qquad \ell _{\nu -1}(\nu )=(-1)^{\nu }(\nu -1).
\end{equation*}%
The partial fraction form also produces at once an interpolation function
for $\ell _{n}(x;\nu )$ (Theorem \ref{thm:interp}), in the sense in which
the Hurwitz zeta function interpolates the Bernoulli polynomials; it turns
out to be, up to an elementary factor, the interpolation function recently
introduced by Pokaz, Ribi\v{c}i\'{c} Penava and Simsek \cite{Peneva1}, whose
defining multiple series thereby collapses to a finite combination of Lerch
functions (Proposition \ref{prop:PRS}). The partial fractions also give a
probabilistic reading in terms of moments of geometric distributions
(Proposition \ref{prop:moments}), which connects the numbers $\ell _{n}(\nu
) $ with the recent work of B.~Simsek and coauthors \cite%
{BSimsek2024,SimsekKilar2025,SukruogluSimsek2025} on such moments.

Let us fix the remaining notation. The Stirling numbers of the second kind $%
S(n,k)$ and of the first kind $s(n,k)$ (signed) are defined by 
\begin{equation}
	\frac{(e^{w}-1)^{k}}{k!}=\sum_{n=0}^{\infty }S(n,k)\frac{w^{n}}{n!},\qquad
	(x)_{\nu }=\sum_{k=0}^{\nu }s(\nu ,k)x^{k}.  \label{stirling}
\end{equation}%
The Bernoulli polynomials $B_{n}(x)$ and the Apostol--Bernoulli polynomials $\mathcal{B}_{n}(x;\beta)$ (cf. \cite{Apostol1951,LuoSrivastava}) are given by 
\begin{equation}
	F_{a}(w,x,\beta )=\frac{we^{xw}}{\beta e^{w}-1}=\sum_{n=0}^{\infty }\mathcal{%
		B}_{n}(x;\beta )\frac{w^{n}}{n!},  \label{apostol}
\end{equation}%
where $B_{n}(x):=\mathcal{B}_{n}(x;1)$, $B_{n}:=B_{n}(0)$ and $\mathcal{B}%
_{n}(\beta ):=\mathcal{B}_{n}(0;\beta )$, so that $B_{1}=-1/2$ and, for $%
\beta \neq 1$, $\mathcal{B}_{0}(x;\beta )=0$, $\mathcal{B}_{1}(x;\beta
)=1/(\beta -1)$.
The Fubini polynomials $F_{n}(y)$ and the two--variable Fubini polynomials $%
F_{n}(x;y)$ are 
\begin{equation}
	\frac{e^{xw}}{1-y\left( e^{w}-1\right) }=\sum_{n=0}^{\infty }F_{n}(x;y)\frac{%
		w^{n}}{n!},\qquad F_{n}(y):=F_{n}(0;y)=\sum_{k=0}^{n}k!\,S(n,k)\,y^{k},
	\label{fubini}
\end{equation}%
and clearly%
\begin{equation*}
	F_{n}(x;y)=\sum_{k=0}^{n}\binom{n}{k}x^{n-k}F_{k}(y);
\end{equation*}%
$F_{n}(1)$ is the $n$-th Fubini (ordered Bell) number (cf. \cite{KilarSimsek2017,TKim2018,KucukogluMTJPAM}). We shall need the
values 
\begin{equation}
	F_{n}(x;-1)=(x-1)^{n},\qquad F_{n}(x;0)=x^{n},  \label{fubini-special}
\end{equation}%
the first because
\[1-(-1)(e^{w}-1)=e^{w}\] 
Finally,
\[
\Phi (z,s,x)=\sum_{r\geq 0}z^{r}(r+x)^{-s}
\]
is Lerch's transcendent, 
\[
\zeta (s,x)=\Phi (1,s,x)
\]
the Hurwitz zeta function, 
\[
Li_{s}(z)=z\Phi (z,s,1)
\]
the polylogarithm, and $Y_{m}(c_{1},\dots ,c_{m})$ the complete Bell
polynomials \cite[p.~134]{Comtet}, i.e., 
\begin{equation}
	\exp \bigg(\sum_{r\geq 1}c_{r}\frac{t^{r}}{r!}\bigg)=\sum_{m\geq
		0}Y_{m}(c_{1},\dots ,c_{m})\frac{t^{m}}{m!}.  \label{bell}
\end{equation}

Every identity between formal power series below is also an identity between
analytic functions in a neighbourhood of $w=0$, and we will not distinguish
between the two points of view except when convergence matters (Sections \ref%
{sec:interp} and \ref{sec:asymp}).

\section{Two expansions of $\dfrac{e^{xw}}{e^{w}-a}$}

\label{sec:lemmas}

The results of this paper depend on the following elementary lemma, which is
stated with its domain of convergence because this is needed for the
interpolation function in Section \ref{sec:interp}.

\begin{lemma}
	\label{lem:fubini} Let $a\in \mathbb{C}$, $a\neq 1$, $x\in \mathbb{C}$. Then 
	\begin{equation}
		\frac{e^{xw}}{e^{w}-a}=-\frac{1}{a-1}\sum_{m=0}^{\infty }F_{m}\left( x;\frac{%
			1}{a-1}\right) \frac{w^{m}}{m!},  \label{eq:lem-fubini}
	\end{equation}%
	as formal power series in $w$, and as analytic functions for $|w|<\log
	(1+|a-1|)$.
\end{lemma}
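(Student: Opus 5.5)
The plan is to rewrite the denominator so that it matches the Fubini generating function \eqref{fubini} directly. With $y=1/(a-1)$ (finite, since $a\neq 1$) we have
\[
e^{w}-a=(e^{w}-1)-(a-1)=-(a-1)\Bigl(1-\tfrac{1}{a-1}(e^{w}-1)\Bigr)=-(a-1)\bigl(1-y(e^{w}-1)\bigr).
\]
Hence
\[
\frac{e^{xw}}{e^{w}-a}=-\frac{1}{a-1}\cdot\frac{e^{xw}}{1-y(e^{w}-1)}.
\]
Substituting the definition \eqref{fubini} of $F_m(x;y)$ then gives \eqref{eq:lem-fubini} at once as an identity of formal power series. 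The one thing to check is that $1-y(e^{w}-1)$ has constant term $1$, so it is invertible in $\mathbb{C}[[w]]$ and the formal quotient is well defined.

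For the analytic statement, I would show that both sides are holomorphic on the disc $|w|<\log(1+|a-1|)$. The right-hand side is then the Taylor series of the left-hand side at $w=0$, so it converges to it on any disc around $0$ on which the left-hand side is holomorphic. Since $e^{xw}$ is entire, it suffices to show that $e^{w}\neq a$ for $|w|<R:=\log(1+|a-1|)$.

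This bound is the only real step. I would use the elementary estimate
\[
|e^{w}-1|\le\sum_{k\ge1}\frac{|w|^{k}}{k!}=e^{|w|}-1 .
\]
For $|w|<R$ this gives $|e^{w}-1|<|a-1|$, so $e^{w}-1\neq a-1$, i.e. $e^{w}\neq a$, and the function $e^{xw}/(e^{w}-a)$ is holomorphic there. Equivalently, $|y(e^{w}-1)|<1$ on this disc, which also justifies the geometric expansion $\sum_k y^k(e^{w}-1)^k$ that underlies the formula $F_n(y)=\sum_k k!\,S(n,k)y^k$.

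No serious obstacle is expected. The only point to flag is that the stated radius need not be the true radius of convergence (that is the distance to the nearest zero of $e^{w}-a$), but the lemma only claims convergence on this disc, which the argument above gives.
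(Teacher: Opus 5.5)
Your proof is correct and follows the paper's argument: the same factorization $e^{w}-a=-(a-1)\bigl(1-y(e^{w}-1)\bigr)$ with $y=1/(a-1)$ identifies the quotient with the generating function \eqref{fubini}, and the same estimate $|e^{w}-1|\le e^{|w|}-1<|a-1|$ handles the analytic statement. The only cosmetic difference is in the analytic step: you use holomorphy of the left side together with uniqueness of Taylor expansions, whereas the paper phrases it as convergence of the geometric series in $u=e^{w}-1$.
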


\begin{proof}
	Put $u=e^{w}-1$, so $e^{w}-a=u-(a-1)=-(a-1)\big(1-u/(a-1)\big)$. Since $u\in
	w\,\mathbb{C}[[w]]$, the geometric series 
	\begin{equation*}
		\frac{1}{1-u/(a-1)}=\sum_{k\geq 0}\frac{u^{k}}{(a-1)^{k}}
	\end{equation*}%
	is a well defined element of $\mathbb{C}[[w]]$ and by \eqref{fubini} (with $%
	y=1/(a-1)$) the product with $e^{xw}$ is $\sum_{m}F_{m}(x;1/(a-1))w^{m}/m!$.
	For the analytic statement note $|e^{w}-1|\leq e^{|w|}-1<|a-1|$ whenever $%
	|w|<\log (1+|a-1|)$.
\end{proof}

In \cite{F1} (see also \cite[Eq.~(37)]{FPTA}) the polynomials $Y_{n}(x,u;a)$
are defined, for $a>0$ and $u\neq 1$, by%
\begin{equation*}
	\frac{a^{xt}}{a^{t}-u}=\sum_{n=0}^{\infty }Y_{n}(x,u;a)\frac{t^{n}}{n!}.
\end{equation*}%
We use them with the base $a=e$ and write the parameter as $a$, so that%
\begin{equation}
	Q(x,w,a):=\frac{e^{xw}}{e^{w}-a}=\sum_{n=0}^{\infty }Y_{n}(x,a;e)\frac{w^{n}%
	}{n!}\qquad (a\neq 1).  \label{ye1}
\end{equation}%
Since the Frobenius--Euler polynomials $H_{n}(x;a)$ are generated by%
\begin{equation*}
	\frac{(1-a)e^{xw}}{e^{w}-a}=\sum_{n=0}^{\infty }H_{n}(x;a)\frac{w^{n}}{n!},
\end{equation*}%
one has $Y_{n}(x,a;e)=H_{n}(x;a)/(1-a)$; in particular $Y_{n}(x,-1;e)=\frac{1}{2}%
H_{n}(x;-1)=\frac{1}{2}E_{n}(x)$, where $E_{n}(x)$ denotes the Euler
polynomials. Comparing \eqref{ye1} with \eqref{eq:lem-fubini} gives%
\begin{equation}
	Y_{n}(x,a;e)=-\frac{1}{a-1}F_{n}\left( x;\frac{1}{a-1}\right) ,\qquad
	H_{n}(x;a)=F_{n}\left( x;\frac{1}{a-1}\right) \qquad (a\neq 1),
	\label{eq:FE-Fubini}
\end{equation}%
so that the Frobenius--Euler polynomials are the two--variable Fubini
polynomials at $y=1/(a-1)$; for $a=-1$ this is the relation $%
E_{n}(x)=F_{n}(x;-{1}/{2})$ which is used in Example \ref{ex:nodes}(b)
below.

\begin{lemma}
	\label{lem:apostol} Let $a\in \mathbb{C}$, $a\neq 0$. Then 
	\begin{equation}
		\frac{e^{xw}}{e^{w}-a}=\frac{1}{a}\sum_{m=0}^{\infty }\frac{\mathcal{B}%
			_{m+1}\left( x;{1}/{a}\right) }{m+1}\,\frac{w^{m}}{m!}\quad (a\neq
		1),\qquad \frac{e^{xw}}{e^{w}-1}=\sum_{m=0}^{\infty }\frac{B_{m+1}(x)}{m+1}\,%
		\frac{w^{m}}{m!}+\frac{1}{w}.  \label{eq:lem-apostol}
	\end{equation}%
	Consequently, for $a\notin \{0,1\}$ and $m\in \mathbb{N}_{0}$, 
	\begin{equation}
		\frac{\mathcal{B}_{m+1}\left( x;{1}/{a}\right) }{m+1}=-\frac{a}{a-1}%
		\,F_{m}\left( x;\frac{1}{a-1}\right) .  \label{eq:AB-F}
	\end{equation}
\end{lemma}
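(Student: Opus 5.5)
The first identity in \eqref{eq:lem-apostol} comes from rewriting the denominator to match the Apostol--Bernoulli generating function \eqref{apostol} with $\beta=1/a$. For $a\neq 0$,
\[
e^{w}-a=a\Big(\tfrac{1}{a}e^{w}-1\Big),\qquad\text{so}\qquad
\frac{e^{xw}}{e^{w}-a}=\frac{1}{a}\cdot\frac{1}{w}\cdot\frac{we^{xw}}{\frac1a e^{w}-1}=\frac{1}{aw}\sum_{n\ge 0}\mathcal{B}_{n}(x;1/a)\frac{w^{n}}{n!}.
\]
When $a\neq 1$ we have $\beta=1/a\neq 1$, and the preliminaries give $\mathcal{B}_0(x;\beta)=0$. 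The series therefore starts at $n=1$ and division by $w$ is legitimate in $\mathbb{C}[[w]]$. Setting $n=m+1$ and using
\[
\frac{w^{m+1}}{(m+1)!}\cdot\frac1w=\frac{1}{m+1}\cdot\frac{w^{m}}{m!}
\]
yields the stated expansion.

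The case $a=1$ runs the same way with the Bernoulli generating function $we^{xw}/(e^{w}-1)=\sum B_n(x)w^n/n!$. Here $B_0(x)=1$, so the $n=0$ term contributes exactly $1/w$. The remaining terms reindex as before to $\sum_m B_{m+1}(x)/(m+1)\, w^m/m!$, giving a Laurent series in $w$.

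For \eqref{eq:AB-F}, take $a\notin\{0,1\}$. Then both Lemma~\ref{lem:fubini} and the first identity of \eqref{eq:lem-apostol} are expansions of the same element $e^{xw}/(e^{w}-a)$ of $\mathbb{C}[[w]]$. Equating the coefficients of $w^m/m!$ gives
\[
\frac{1}{a}\,\frac{\mathcal{B}_{m+1}(x;1/a)}{m+1}=-\frac{1}{a-1}F_m\!\left(x;\frac{1}{a-1}\right),
\]
and multiplying by $a$ finishes the proof.

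The only delicate point is the division by $w$. It requires the vanishing of $\mathcal{B}_0(x;\beta)$ for $\beta\neq1$, which holds because $\beta e^{0}-1=\beta-1\neq 0$: the generating function $we^{xw}/(\beta e^{w}-1)$ is then $w$ times a power series and so has zero constant term. Everything else is reindexing.
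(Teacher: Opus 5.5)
Your proof is correct and follows essentially the same route as the paper: write $e^{xw}/(e^{w}-a)=\frac{1}{aw}\cdot\frac{we^{xw}}{a^{-1}e^{w}-1}$, use $\mathcal{B}_{0}(x;1/a)=0$ for $a\neq1$ (respectively $B_{0}(x)=1$ for $a=1$) to reindex, and then compare coefficients with Lemma~\ref{lem:fubini} to obtain \eqref{eq:AB-F}. Your justification that $\mathcal{B}_{0}(x;\beta)=0$ because $\beta e^{0}-1\neq0$ is a detail the paper leaves implicit.
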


\begin{proof}
	By \eqref{apostol} with $\beta =1/a$, 
	\begin{equation*}
		\frac{e^{xw}}{e^{w}-a}=\frac{1}{a}\cdot \frac{1}{w}\cdot \frac{we^{xw}}{%
			a^{-1}e^{w}-1}=\frac{1}{a}\sum_{m\geq 0}\mathcal{B}_{m}\left( x;\frac{1}{a}\right) 
		\frac{w^{m-1}}{m!}.
	\end{equation*}%
	For $a\neq 1$ the term $m=0$ vanishes and reindexing gives the first
	formula; for $a=1$ the term $m=0$ is ${B_{0}(x)}/{w}={1}/{w}$.
	Comparing with \eqref{eq:lem-fubini} gives \eqref{eq:AB-F}.
\end{proof}

Comparing \eqref{ye1} with \eqref{eq:lem-apostol} also gives%
\begin{equation*}
	Y_{m}(x,a;e)=\frac{\mathcal{B}_{m+1}\left( x; {1}/{a}\right) }{a(m+1)}%
	,\qquad \mathcal{B}_{m+1}\left( x;\frac{1}{a}\right) =\frac{a(m+1)}{1-a}%
	H_{m}(x;a)\qquad (a\notin \{0,1\}),
\end{equation*}%
the second of which is the classical relation between the Apostol--Bernoulli
and the Frobenius--Euler polynomials, cf.\ \cite{F1}.

Relation \eqref{eq:AB-F} is known; for the two--variable Fubini polynomials
it is given in \cite{KilarSimsek2017}, and the polynomials $F_{n}(y)$
together with their generating function are studied in \cite[Section 3]%
{Boyadzhiev}. Since it is used repeatedly below, a short proof has been
included. Note also that \eqref{eq:lem-apostol} is meaningless at $a=0$,
while \eqref{eq:lem-fubini} at $a=0$ gives, by \eqref{fubini-special}, the
correct expansion of ${e^{xw}}/{e^{w}}=e^{(x-1)w}$; the Fubini form is
thus the more uniform one and we shall regard $j=0$ as an ordinary node in
what follows.

\begin{lemma}
	\label{lem:pf} For $\nu \in \mathbb{N}$ and $u\notin \{0,1,\dots ,\nu -1\}$, 
	\begin{equation}
		\frac{1}{(u)_{\nu }}=\sum_{j=0}^{\nu -1}\frac{A_{j}^{(\nu )}}{u-j},\quad
		A_{j}^{(\nu )}:=\prod_{\substack{ i=0  \\ i\neq j}}^{\nu -1}\frac{1}{j-i}=%
		\frac{(-1)^{\nu -1-j}}{j!\,(\nu -1-j)!}=\frac{(-1)^{\nu -1-j}}{(\nu -1)!}%
		\binom{\nu -1}{j}.  \label{eq:pf}
	\end{equation}
\end{lemma}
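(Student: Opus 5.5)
The argument is the standard partial fraction decomposition for a polynomial with simple roots. The denominator $(u)_{\nu}=\prod_{i=0}^{\nu-1}(u-i)$ is a monic polynomial of degree $\nu$ with the $\nu$ distinct roots $0,1,\dots,\nu-1$. We therefore seek constants $A_j^{(\nu)}$ with $1/(u)_\nu=\sum_j A_j^{(\nu)}/(u-j)$. Multiplying through by $(u)_\nu$, this is equivalent to the polynomial identity
\[
1=\sum_{j=0}^{\nu-1}A_j^{(\nu)}\prod_{i\neq j}(u-i).
\]

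To prove that identity with the stated coefficients, note that the right-hand side is a polynomial of degree at most $\nu-1$. Evaluate it at $u=k$ for $k\in\{0,\dots,\nu-1\}$. Every term with $j\neq k$ contains the factor $(k-k)$ and vanishes, while the $j=k$ term equals $A_k^{(\nu)}\prod_{i\neq k}(k-i)$. With $A_k^{(\nu)}=\prod_{i\neq k}(k-i)^{-1}$ this term is $1$. Both sides are therefore polynomials of degree $\le\nu-1$ that agree at $\nu$ points, so they coincide. (This is just Lagrange interpolation of the constant $1$.) Dividing by $(u)_\nu$, which is legitimate for $u\notin\{0,\dots,\nu-1\}$, gives \eqref{eq:pf} with the product formula for $A_j^{(\nu)}$.

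It remains to evaluate the product. Split it over $i<j$ and $i>j$. For $i=0,\dots,j-1$ the factors $j-i$ run through $1,\dots,j$, giving $j!$. For $i=j+1,\dots,\nu-1$ the factors $j-i$ are $-1,\dots,-(\nu-1-j)$, giving $(-1)^{\nu-1-j}(\nu-1-j)!$. Hence $\prod_{i\neq j}(j-i)=(-1)^{\nu-1-j}j!(\nu-1-j)!$, and its reciprocal is the middle expression in \eqref{eq:pf}, since $(-1)^{-(\nu-1-j)}=(-1)^{\nu-1-j}$. Multiplying numerator and denominator by $(\nu-1)!$ gives the binomial form.

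There is no real obstacle here. The only point needing care is the sign bookkeeping in the $i>j$ factors, together with the remark that the case $\nu=1$ (an empty product, $A_0^{(1)}=1$) is consistent with all three formulas.
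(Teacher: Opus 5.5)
Your proof is correct and is essentially the paper's: the paper gets $A_j^{(\nu)}$ as the residue $\lim_{u\to j}(u-j)/(u)_\nu$, tacitly assuming the simple-pole decomposition exists, and then evaluates $\prod_{i\neq j}(j-i)=(-1)^{\nu-1-j}j!\,(\nu-1-j)!$ exactly as you do. Your check of $1=\sum_j A_j^{(\nu)}\prod_{i\neq j}(u-i)$ at the $\nu$ nodes (Lagrange interpolation of the constant $1$) is a slightly more self-contained justification, since it proves the decomposition exists rather than assuming it.
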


\begin{proof}
	The poles are simple and%
	\begin{equation*}
		A_{j}^{(\nu )}=\lim_{u\rightarrow j}(u-j)/(u)_{\nu };
	\end{equation*}%
	the product $\prod_{i\neq j}(j-i)$ equals%
	\begin{equation*}
		j!(-1)(-2)\cdots (-(\nu -1-j))=(-1)^{\nu -1-j}j!(\nu -1-j)!.
	\end{equation*}
\end{proof}

When $\nu $ is fixed we write $A_{j}$ for $A_{j}^{(\nu )}$. We record the
values%
\begin{equation*}
	A_{0}=(-1)^{\nu -1}/(\nu -1)!
\end{equation*}%
and%
\begin{equation*}
	A_{1}=(-1)^{\nu }/(\nu -2)!
\end{equation*}%
($\nu \geq 2$), and the identity%
\begin{equation*}
	\sum_{j=0}^{\nu -1}A_{j}=0
\end{equation*}%
for $\nu \geq 2$ (compare the coefficients of $u^{\nu -1}$ in $%
1=\sum_{j}A_{j}\prod_{i\neq j}(u-i)$).

\begin{remark}
	\label{rem:gkp} Lemma \ref{lem:pf} can also be read off from the identity %
	\cite[p.~188, Eq.~(5.41)]{Ronald}%
	\begin{equation}
		\sum_{j=0}^{m}\binom{m}{j}\frac{(-1)^{j}}{\alpha +j}=\frac{m!}{\alpha
			(\alpha +1)\cdots (\alpha +m)}=\frac{1}{\alpha \binom{m+\alpha }{m}},\qquad
		\alpha \notin \{0,-1,\ldots ,-m\},  \label{r1}
	\end{equation}%
	where $\binom{\alpha }{m}=(\alpha )_{m}/m!$. With $\alpha =-u$ and $m=\nu -1$
	it becomes%
	\begin{equation}
		\sum_{j=0}^{\nu -1}\binom{\nu -1}{j}\frac{(-1)^{j+1}}{u-j}=\frac{(\nu -1)!}{%
			-u(1-u)\cdots (\nu -1-u)}=\frac{(-1)^{\nu }(\nu -1)!}{(u)_{\nu }},
		\label{r3}
	\end{equation}%
	which is \eqref{eq:pf} with the values of $A_{j}^{(\nu )}$ inserted. For $%
	u=e^{w}$, \eqref{r3} says that%
	\begin{equation}
		\frac{(-1)^{\nu }(\nu -1)!}{w^{\nu }}\,G_{\nu }(w;0)=-e^{-w}+\frac{\nu -1}{%
			e^{w}-1}+\sum_{j=2}^{\nu -1}\binom{\nu -1}{j}(-1)^{j+1}Q(0,w,j),  \label{r4}
	\end{equation}%
	i.e. the generating function \eqref{def-ell} is, up to the factor $w^{\nu }$,
	a signed binomial combination of the functions $Q(0,w,j)$ of \eqref{ye1}
	and of $1/(e^{w}-1)$. The identities between the coefficients which follow
	from \eqref{r4} are those of the next section; see Corollary \ref{cor:FE}.
\end{remark}

\section{The partial fraction formula}
In this section, by using partial fraction formula, we derive novel identities involving the polynomials $\ell _{n}(x;\nu
)$, the Bernoulli polynomials, the Frobenius--Euler numbers, and the Stirling numbers of the second kind.

\label{sec:pf}

\begin{theorem}
	\label{thm:pf} Let $\nu \geq 2$ and $x\in \mathbb{C}$. Then $\ell _{n}(x;\nu
	)=0$ for $0\leq n\leq \nu -2$, $\ell _{\nu -1}(x;\nu )=(-1)^{\nu }(\nu -1)$,
	and for $n\geq \nu $ 
	\begin{equation}
		\ell _{n}(x;\nu )=\frac{(-1)^{\nu }(n)_{\nu -1}}{(\nu -2)!}B_{n-\nu
			+1}(x)-(n)_{\nu }\sum_{\substack{ j=0  \\ j\neq 1}}^{\nu -1}\frac{A_{j}}{j-1}%
		F_{n-\nu }\left( x;\frac{1}{j-1}\right) .  \label{eq:main-pf}
	\end{equation}%
	For $\nu =1$ one has%
	\begin{equation*}
		\ell _{n}(x;1)=n(x-1)^{n-1}.
	\end{equation*}
\end{theorem}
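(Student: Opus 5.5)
Substitute $u=e^{w}$ in Lemma \ref{lem:pf}, multiply by $w^{\nu}e^{xw}$, and write
\[
G_\nu(w;x)=w^{\nu}\sum_{j=0}^{\nu-1}A_j\,\frac{e^{xw}}{e^{w}-j}.
\]
The node $j=1$ is treated separately from the nodes $j\neq 1$. For $j=1$ we use the second formula of Lemma \ref{lem:apostol}:
\[
\frac{e^{xw}}{e^w-1}=\frac1w+\sum_{m\ge0}\frac{B_{m+1}(x)}{m+1}\frac{w^m}{m!}.
\]
For every $j\neq1$, including $j=0$, we use Lemma \ref{lem:fubini} with $a=j$:
\[
\frac{e^{xw}}{e^w-j}=-\frac1{j-1}\sum_{m\ge0}F_m\!\left(x;\tfrac1{j-1}\right)\frac{w^m}{m!}.
\]
As the paper notes, this Fubini form is valid uniformly at $j=0$. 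All of this is an identity of formal Laurent series, which becomes a power series after multiplication by $w^\nu$.

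Multiplying by $w^\nu$, the coefficient of $w^n/n!$ is read off as follows.

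\begin{itemize}
\item The term $A_1w^{\nu-1}$ contributes $(\nu-1)!\,A_1$ at $n=\nu-1$ only. Since $A_1=(-1)^\nu/(\nu-2)!$, this equals $(-1)^\nu(\nu-1)$.
\item A term $c_m w^{m+\nu}/m!$ contributes $n!\,c_{n-\nu}/(n-\nu)!=(n)_\nu c_{n-\nu}$ at $n=m+\nu$.
\item For the Bernoulli part, $c_m=A_1B_{m+1}(x)/(m+1)$. This gives
\[
(n)_\nu A_1\frac{B_{n-\nu+1}(x)}{n-\nu+1}=\frac{(-1)^\nu(n)_{\nu-1}}{(\nu-2)!}B_{n-\nu+1}(x),
\]
using $(n)_\nu/(n-\nu+1)=(n)_{\nu-1}$.
\item The Fubini parts give
\[
-(n)_\nu\sum_{j\ne1}\frac{A_j}{j-1}F_{n-\nu}\!\left(x;\tfrac1{j-1}\right).
\]
\end{itemize}

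No terms appear with exponent below $\nu-1$, so $\ell_n(x;\nu)=0$ for $n\le\nu-2$.

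At $n=\nu-1$ only the $1/w$ term contributes, since all power-series parts start at $w^\nu$. This gives $\ell_{\nu-1}(x;\nu)=(-1)^\nu(\nu-1)$. For $n\ge\nu$ the two contributions above combine into \eqref{eq:main-pf}.

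For $\nu=1$ there is no $j=1$ node. In that case
\[
G_1(w;x)=we^{(x-1)w}=\sum_n n(x-1)^{n-1}\frac{w^n}{n!},
\]
either directly or via \eqref{fubini-special}.

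The only delicate points are bookkeeping ones. The first is the index shift $m=n-\nu$ together with the simplification $(n)_\nu/(n-\nu+1)=(n)_{\nu-1}$. The second is checking that the $1/w$ pole from $j=1$ is exactly what produces the isolated value at $n=\nu-1$. No convergence issues arise, since everything is formal.
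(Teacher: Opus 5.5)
Your proof is correct and is essentially the paper's argument: you substitute $u=e^{w}$ in Lemma~\ref{lem:pf}, expand the node $j=1$ by Bernoulli polynomials and every node $j\neq 1$ (including $j=0$) by Lemma~\ref{lem:fubini}, and read off coefficients. The only cosmetic difference is that the paper keeps the factor $we^{xw}/(e^{w}-1)$ intact, so the value at $n=\nu-1$ comes out of the same Bernoulli term via $B_0(x)=1$, whereas you isolate it as the $1/w$ pole in the Laurent form from Lemma~\ref{lem:apostol}.
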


\begin{proof}
	The case $\nu=1$ is $we^{(x-1)w}$. Let $\nu\ge2$. By Lemma \ref{lem:pf} with 
	$u=e^{w}$, 
	\begin{equation}  \label{eq:G-split}
		G_\nu(w;x)=w^{\nu-1}A_1\frac{we^{xw}}{e^{w}-1}+w^{\nu}\sum_{j\ne1}A_j\frac{%
			e^{xw}}{e^{w}-j}.
	\end{equation}
	The first term equals $A_1\sum_{m\ge0}B_m(x)w^{m+\nu-1}/m!$, whose
	coefficient of $w^n/n!$ is 
	\begin{equation*}
		\frac{A_1\,n!}{(n-\nu+1)!}\,B_{n-\nu+1}(x)=A_1(n)_{\nu-1}B_{n-\nu+1}(x)\quad%
		\mbox{for $n\ge \nu-1$}
	\end{equation*}
	and zero for $n\le \nu-2$. By Lemma \ref{lem:fubini} (every $j\ne1$ is
	admissible) the $j$-th term of the sum is 
	\begin{equation*}
		-\frac{A_j}{j-1}\sum_{m\ge0}F_m\Bigl(x;\frac1{j-1}\Bigr)\frac{w^{m+\nu}}{m!},
	\end{equation*}
	with coefficient of $w^n/n!$ equal to 
	\begin{equation*}
		-\frac{A_j}{j-1}(n)_{\nu}F_{n-\nu}(x;\frac1{j-1})\quad\mbox{for $n\ge \nu$}
	\end{equation*}
	and zero otherwise. Since $A_1(\nu-1)_{\nu-1}B_0(x)=A_1(\nu-1)!=(-1)^\nu(%
	\nu-1)$, all assertions follow.
\end{proof}

Since the $j=0$ term of \eqref{eq:main-pf} is $A_0(n)_{\nu}(x-1)^{n-\nu}$ by %
\eqref{fubini-special}, and since $F_m(0;y)=F_m(y)$, we may write the
theorem at $x=0$ entirely with Stirling numbers of the second kind:

\begin{corollary}
	\label{cor:stirling} For $n\ge \nu\ge3$, 
	\begin{align}  \label{eq:cor-stirling}
		\ell_n(\nu)=\frac{(-1)^{n-1}(n)_{\nu}}{(\nu-1)!}&+\frac{(-1)^{\nu}(n)_{\nu-1}%
		}{(\nu-2)!}B_{n-\nu+1} \\
		&-(n)_{\nu}\sum_{j=2}^{\nu-1}\frac{(-1)^{\nu-1-j}}{j!\,(\nu-1-j)!\,(j-1)}%
		\sum_{k=0}^{n-\nu}\frac{k!\,S(n-\nu,k)}{(j-1)^{k}}.  \notag
	\end{align}
\end{corollary}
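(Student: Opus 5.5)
The plan is to specialize Theorem \ref{thm:pf} at $x=0$ and evaluate each term explicitly. By the definition in \eqref{def-ell}, $\ell_n(\nu)=\ell_n(0;\nu)$. For $n\ge\nu$ formula \eqref{eq:main-pf} then gives
\[
\ell_n(\nu)=\frac{(-1)^{\nu}(n)_{\nu-1}}{(\nu-2)!}B_{n-\nu+1}-(n)_{\nu}\sum_{\substack{j=0\\ j\neq 1}}^{\nu-1}\frac{A_j}{j-1}F_{n-\nu}\Bigl(0;\frac{1}{j-1}\Bigr),
\]
since $B_{n-\nu+1}(0)=B_{n-\nu+1}$. The middle term of \eqref{eq:cor-stirling} is then already in place. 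It remains to split off the $j=0$ summand and rewrite the summands with $2\le j\le \nu-1$. The hypothesis $\nu\ge 3$ only guarantees that this range is nonempty; for $\nu=2$ the double sum would simply be empty, and the same argument still applies.

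For the $j=0$ summand, the value $1/(j-1)=-1$ together with \eqref{fubini-special} gives $F_{n-\nu}(0;-1)=(0-1)^{n-\nu}=(-1)^{n-\nu}$. With $A_0=(-1)^{\nu-1}/(\nu-1)!$ and the factor $1/(j-1)=-1$, this summand contributes
\[
-(n)_\nu\cdot(-1)\cdot\frac{(-1)^{\nu-1}}{(\nu-1)!}(-1)^{n-\nu}=\frac{(-1)^{n-1}(n)_\nu}{(\nu-1)!}.
\]
The sign bookkeeping here is the only point that needs care: the exponents combine as $1+\nu-1+n-\nu=n$, which gives $(-1)^{n}$. The outer minus sign then turns this into $(-1)^{n-1}$. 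This matches the first term of \eqref{eq:cor-stirling}.

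For $2\le j\le\nu-1$, I would insert the value $A_j=(-1)^{\nu-1-j}/(j!\,(\nu-1-j)!)$ from Lemma \ref{lem:pf}. I would then expand $F_{n-\nu}(0;y)=F_{n-\nu}(y)=\sum_{k=0}^{n-\nu}k!\,S(n-\nu,k)\,y^k$ by \eqref{fubini}, taking $y=1/(j-1)$, so that $y^k=(j-1)^{-k}$. Substituting both gives exactly the double sum in \eqref{eq:cor-stirling}, and no step here is a genuine obstacle.
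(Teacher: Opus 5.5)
Your proposal is correct and takes the paper's own route: specialize Theorem \ref{thm:pf} at $x=0$, use $F_{n-\nu}(0;-1)=(-1)^{n-\nu}$ to turn the $j=0$ summand into $(-1)^{n-1}(n)_\nu/(\nu-1)!$, and expand the remaining $F_{n-\nu}\bigl(1/(j-1)\bigr)$ via $F_m(y)=\sum_k k!\,S(m,k)\,y^k$ with the explicit $A_j$ from Lemma \ref{lem:pf}. Your sign bookkeeping for the $j=0$ term is right. Your remark that $\nu=2$ merely empties the double sum matches the paper's comment following the corollary.
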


For $\nu =2$ the last sum is empty and%
\begin{equation*}
	\ell _{n}(2)=(-1)^{n-1}(n)_{2}+nB_{n-1};
\end{equation*}%
for instance $\ell _{3}(2)=6+3\cdot (1/6)={13}/{2}$. For $\nu =3$, using $%
\sum_{k}k!S(m,k)=F_{m}(1)$, 
\[
\ell _{n}(3)=\frac{1}{2}(-1)^{n-1}(n)_{3}-(n)_{2}B_{n-2}-\frac{1}{2}(n)_{3}F_{n-3}(1),
\]
which for $n=4$ gives $-12-12\cdot (1/6)-12\cdot 1=-26$ (the Fubini numbers
begin $1,1,3,13,75,\dots $). Table \ref{tab:values} was computed from %
\eqref{eq:cor-stirling} and checked against a direct expansion of %
\eqref{def-ell}.

\begin{table}[H]
	\caption{The numbers $\ell_n(\protect\nu)$, $1\le \protect\nu\le 6$, $0\le
		n\le 9$.}
	\label{tab:values}\centering
	\begin{tabular}{c|rrrrrrrrrr}
		\toprule $\nu\backslash n$ & 0 & 1 & 2 & 3 & 4 & 5 & 6 & 7 & 8 & 9 \\ 
		\midrule 1 & 0 & 1 & $-2$ & 3 & $-4$ & 5 & $-6$ & 7 & $-8$ & 9 \\ 
		2 & 0 & 1 & $-3$ & $\frac{13}{2}$ & $-12$ & $\frac{119}{6}$ & $-30$ & $\frac{%
			253}{6}$ & $-56$ & $\frac{717}{10}$ \\ 
		3 & 0 & 0 & $-2$ & 3 & $-26$ & $-60$ & $-839$ & $-7770$ & $-\frac{273172}{3}$
		& $-1179864$ \\ 
		4 & 0 & 0 & 0 & 3 & 0 & 80 & 450 & 5404 & 61320 & 807036 \\ 
		5 & 0 & 0 & 0 & 0 & $-4$ & $-\frac{20}{3}$ & $-\frac{580}{3}$ & $-\frac{14350%
		}{9}$ & $-\frac{558488}{27}$ & $-\frac{2439080}{9}$ \\ 
		6 & 0 & 0 & 0 & 0 & 0 & 5 & $\frac{35}{2}$ & $\frac{9485}{24}$ & $\frac{%
			150185}{36}$ & $\frac{2880437}{48}$ \\ 
		\bottomrule
	\end{tabular}%
\end{table}

Using Lemma \ref{lem:apostol} instead of Lemma \ref{lem:fubini} on the nodes 
$j\ge2$ we obtain the form which is closest to the product observed in \cite%
{SimsekSurvey}, but now as a sum.

\begin{corollary}
	\label{cor:apostol} For $n\geq \nu \geq 2$, 
	\begin{equation}
		\ell _{n}(x;\nu )=\frac{(-1)^{\nu -1}(n)_{\nu }}{(\nu -1)!}(x-1)^{n-\nu }+%
		\frac{(n)_{\nu -1}}{(\nu -1)!}\sum_{j=1}^{\nu -1}\binom{\nu -1}{j}\frac{(-1)^{\nu
				-1-j}\mathcal{B}_{n-\nu +1}\left( x;\frac{1}{j}\right) }{j},
		\label{eq:cor-apostol}
	\end{equation}%
	where%
	\begin{equation*}
		\mathcal{B}_{m}(x;1)=B_{m}(x).
	\end{equation*}
\end{corollary}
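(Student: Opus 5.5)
The plan is to start again from the split \eqref{eq:G-split} in the proof of Theorem \ref{thm:pf}. This time the node $j=0$ is kept on the Fubini side, while every node $j\ge 1$ is expanded with Lemma \ref{lem:apostol}. Concretely, write
\[
G_\nu(w;x)=w^{\nu}A_0\frac{e^{xw}}{e^{w}}+w^{\nu-1}\sum_{j=1}^{\nu-1}A_j\,w\frac{e^{xw}}{e^{w}-j}.
\]
The first term is $A_0w^\nu e^{(x-1)w}$. Its coefficient of $w^n/n!$ is $A_0(n)_\nu(x-1)^{n-\nu}$, and $A_0=(-1)^{\nu-1}/(\nu-1)!$ is exactly the first summand of \eqref{eq:cor-apostol}.

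For the nodes $j\ge1$, avoid the shifted form of Lemma \ref{lem:apostol} and use the unshifted identity from its proof:
\[
w\frac{e^{xw}}{e^{w}-j}=\frac1j\,\frac{we^{xw}}{j^{-1}e^{w}-1}=\frac1j\sum_{m\ge0}\mathcal B_m(x;1/j)\frac{w^m}{m!}.
\]
This holds uniformly for $j=1$, where it gives $B_m(x)$, and for $j\ge2$, where $\mathcal B_0=0$. Multiplying by $w^{\nu-1}$, the coefficient of $w^n/n!$ is $(n)_{\nu-1}\mathcal B_{n-\nu+1}(x;1/j)/j$ whenever $n\ge\nu-1$.

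Next, insert $A_j=(-1)^{\nu-1-j}\binom{\nu-1}{j}/(\nu-1)!$ from Lemma \ref{lem:pf} and sum over $j=1,\dots,\nu-1$. This gives the second summand of \eqref{eq:cor-apostol}, with the $j=1$ term reading $\mathcal B_{m}(x;1)=B_m(x)$ as stated. The restriction $n\ge\nu$ is needed only so that the first term is given by the formula rather than vanishing, since $(n)_\nu=0$ for $n=\nu-1$ anyway.

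The only delicate point is bookkeeping the indices. One must check that using the unshifted Apostol--Bernoulli expansion together with the prefactor $w^{\nu-1}$, rather than $w^\nu$, produces $(n)_{\nu-1}$ and the index $n-\nu+1$. One must also check that the $1/w$ pole at $j=1$ is exactly absorbed by this extra factor of $w$. As a consistency check, the result can be compared with Theorem \ref{thm:pf} via \eqref{eq:AB-F}:
\[
\mathcal B_{n-\nu+1}(x;1/j)/j=-(n-\nu+1)F_{n-\nu}(x;1/(j-1))/(j-1).
\]
Together with $(n)_{\nu-1}(n-\nu+1)=(n)_\nu$, this recovers \eqref{eq:main-pf}.
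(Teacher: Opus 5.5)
Your proof is correct and is essentially the paper's. Both arguments read off coefficients from the partial-fraction split \eqref{eq:G-split}: the node $j=0$ gives $A_0(n)_\nu(x-1)^{n-\nu}$, and the nodes $j\ge1$ are expanded with the Apostol--Bernoulli generating function. The only difference is bookkeeping. The paper applies the shifted form \eqref{eq:lem-apostol} with prefactor $w^{\nu}$ for $j\ge2$, simplifies $(n)_\nu/(n-\nu+1)=(n)_{\nu-1}$, and takes the $j=1$ term from Theorem \ref{thm:pf}; you use the unshifted series with prefactor $w^{\nu-1}$ for all $j\ge1$ at once.
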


\begin{proof}
	By \eqref{eq:lem-apostol}, for $j\geq 2$ the coefficient of $w^{n}/n!$ in $%
	w^{\nu }A_{j}e^{xw}/(e^{w}-j)$ is 
	\begin{equation*}
		\frac{A_{j}}{j}(n)_{\nu }\frac{\mathcal{B}_{n-\nu +1}(x;1/j)}{n-\nu +1}=\frac{A_{j}}{j}%
		(n)_{\nu -1}\mathcal{B}_{n-\nu +1}(x;1/j),
	\end{equation*}%
	which has the same shape as the $j=1$ term.
\end{proof}

In terms of the polynomials $Y_{n}(x,a;e)$ of \eqref{ye1}, i.e. of the
Frobenius--Euler polynomials, Theorem \ref{thm:pf} reads as follows.

\begin{corollary}
	\label{cor:FE} For $n\geq \nu \geq 2$ and $x\in \mathbb{C}$,%
	\begin{align}
		\ell _{n}(x;\nu )& =\frac{(-1)^{\nu }(n)_{\nu -1}}{(\nu -2)!}B_{n-\nu
			+1}(x)+(n)_{\nu }\sum_{\substack{ j=0  \\ j\neq 1}}^{\nu -1}A_{j}\,Y_{n-\nu
		}(x,j;e)  \label{eq:cor-FE} \\
		& =\frac{(-1)^{\nu }(n)_{\nu -1}}{(\nu -2)!}B_{n-\nu +1}(x)-(n)_{\nu }\sum_{%
			\substack{ j=0  \\ j\neq 1}}^{\nu -1}\frac{A_{j}}{j-1}H_{n-\nu }(x;j),
		\notag
	\end{align}%
	where $H_{m}(x;0)=(x-1)^{m}$. At $x=0$ the last sum involves the
	Frobenius--Euler numbers $H_{m}(j):=H_{m}(0;j)$ at the integers $j=2,\dots
	,\nu -1$.
\end{corollary}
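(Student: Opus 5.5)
The plan is to deduce Corollary \ref{cor:FE} directly from Theorem \ref{thm:pf}, using the identifications \eqref{eq:FE-Fubini}. The $j=1$ term (the Bernoulli term) is carried over unchanged. Only the sum over the nodes $j\neq 1$ has to be rewritten. For each $j\in\{0,2,3,\dots,\nu-1\}$ we have $j\neq 1$, so \eqref{eq:FE-Fubini} applies with $a=j$ and gives
\begin{equation*}
Y_{n-\nu}(x,j;e)=-\frac{1}{j-1}F_{n-\nu}\Bigl(x;\frac{1}{j-1}\Bigr),\qquad H_{n-\nu}(x;j)=F_{n-\nu}\Bigl(x;\frac{1}{j-1}\Bigr).
\end{equation*}

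First, I substitute the first identity into \eqref{eq:main-pf}. Each summand there becomes
\begin{equation*}
-(n)_\nu\frac{A_j}{j-1}F_{n-\nu}\Bigl(x;\frac{1}{j-1}\Bigr)=(n)_\nu A_j Y_{n-\nu}(x,j;e),
\end{equation*}
which yields the first line of \eqref{eq:cor-FE}. Second, substituting the second identity yields the second line verbatim.

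Alternatively, one can read the result off \eqref{eq:G-split}, since $e^{xw}/(e^{w}-j)=Q(x,w,j)$ is the generating function \eqref{ye1}. Extracting the coefficient of $w^n/n!$ in $w^\nu Q(x,w,j)$ gives $(n)_\nu Y_{n-\nu}(x,j;e)$ directly, exactly as in the proof of Theorem \ref{thm:pf}.

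The only point needing care is the node $j=0$. There the Frobenius--Euler generating function degenerates to $e^{xw}/e^{w}=e^{(x-1)w}$, so $H_m(x;0)=(x-1)^m$. This agrees with $F_m(x;-1)=(x-1)^m$ from \eqref{fubini-special}, so the convention stated in the corollary is consistent with \eqref{eq:FE-Fubini} at $a=0$. Also, at $j=0$ the factor $-A_0/(0-1)=A_0$ gives the term $A_0(n)_\nu(x-1)^{n-\nu}$, matching the remark after Theorem \ref{thm:pf}.

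Finally, setting $x=0$ turns $H_{n-\nu}(0;j)$ into the Frobenius--Euler numbers $H_{n-\nu}(j)$ for $j=2,\dots,\nu-1$, while the $j=0$ term reduces to $(-1)^{n-\nu}$. I expect no real obstacle: the argument is bookkeeping. The main thing to check is the sign and the factor $1/(j-1)$ in passing between the $Y$ and $H$ normalizations, since $Y_n(x,a;e)=H_n(x;a)/(1-a)$.
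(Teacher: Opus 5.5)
Your proposal is correct and is essentially the paper's proof. The paper reads the first line off \eqref{eq:G-split} coefficientwise via the generating function \eqref{ye1}, which is your alternative route. It obtains the second line from $Y_m(x,j;e)=H_m(x;j)/(1-j)$, or equivalently from \eqref{eq:main-pf} and \eqref{eq:FE-Fubini}, which is your main route, and it makes the same check $Y_m(x,0;e)=H_m(x;0)=(x-1)^m$ at the node $j=0$.
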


\begin{proof}
	By \eqref{ye1}, $w^{\nu }A_{j}Q(x,w,j)=A_{j}\sum_{m\geq 0}Y_{m}(x,j;e)w^{m+\nu
	}/m!$ for $j\neq 1$, so the first formula is \eqref{eq:G-split} read
	coefficientwise (for $j=0$, $Q(x,w,0)=e^{(x-1)w}$ and $%
	Y_{m}(x,0;e)=(x-1)^{m}=H_{m}(x;0)$). The second follows from $%
	Y_{m}(x,j;e)=H_{m}(x;j)/(1-j)$, or from \eqref{eq:main-pf} and %
	\eqref{eq:FE-Fubini}.
\end{proof}

\begin{remark}
	\label{rem:raise} Formula \eqref{simsek-raise}, taken from \cite%
	{SimsekSurvey}, is the case $\nu =6$ of the general \textquotedblleft
	raising\textquotedblright\ identity 
	\begin{equation}
		\ell _{n}(x;\nu +1)=\frac{1}{\nu }\sum_{k=0}^{n}\binom{n}{k}\mathcal{B}_{k}\!\left( 
		\tfrac{1}{\nu }\right) \ell _{n-k}(x;\nu ),  \label{eq:raise}
	\end{equation}%
	which is equivalent to 
	\begin{equation*}
		G_{\nu +1}(w;x)=G_{\nu }(w;x)\cdot \frac{1}{\nu }\cdot \frac{w}{\nu
			^{-1}e^{w}-1}.
	\end{equation*}%
	Starting from $\ell _{n}(x;1)=n(x-1)^{n-1}$ and iterating \eqref{eq:raise} $%
	\nu -1$ times one gets a $(\nu -1)$--fold convolution; \eqref{eq:cor-apostol}
	shows that this convolution reduces to a single sum, at the cost that the
	Apostol--Bernoulli polynomials appear at the index $n-\nu +1$ rather than at
	all intermediate indices. This is the usual advantage of partial fractions
	over products; it does not appear to have been applied to \eqref{def-ell}
	previously.
\end{remark}

\section{Expansion at $e^{w}=1$: Stirling and harmonic numbers}

\label{sec:harmonic}

Theorem \ref{thm:pf} treats all nodes alike. There is however a
distinguished node, $j=1$, because it is the only one at which the
denominator of \eqref{def-ell} vanishes for $w=0$. It is natural to separate
it and to expand the remaining rational function around $u=1$. Put, for $%
\nu\ge2$, 
\begin{equation}  \label{def-f}
	f_\nu(u):=\frac{u-1}{(u)_{\nu}}=\frac{1}{u\prod\limits_{j=2}^{\nu-1}(u-j)}\qquad%
	\Big(f_2(u)=\frac1u\Big),
\end{equation}
so that 
\begin{equation*}
	G_\nu(w;x)=w^{\nu-1}\cdot\frac{we^{xw}}{e^{w}-1}\cdot f_\nu(e^{w}).
\end{equation*}
The function $f_\nu$ is analytic in the disc $|u-1|<1$ (its poles are at $%
0,2,\dots,\nu-1$) and its Taylor coefficients there are computed in the next
lemma.

\begin{lemma}
	\label{lem:fder} For $\nu\ge2$ and $m\in\mathbb{N}_0$, 
	\begin{equation}  \label{eq:fder}
		f_\nu^{(m)}(1)=\frac{(-1)^{\nu}}{(\nu-2)!}\,Y_m\big(c_1,c_2,\dots,c_m\big)%
		,\qquad c_r:=(r-1)!\left((-1)^{r}+H^{(r)}_{\nu-2}\right).
	\end{equation}
	In particular, 
	\begin{equation*}
		f_\nu(1)=A_1=\frac{(-1)^\nu}{(\nu-2)!},\quad f_\nu^{\prime
		}(1)=A_1(H_{\nu-2}-1), \quad f_\nu^{\prime \prime }(1)=A_1\big(%
		(H_{\nu-2}-1)^2+1+H^{(2)}_{\nu-2}\big).
	\end{equation*}
\end{lemma}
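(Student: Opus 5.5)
The natural route is through the logarithmic derivative. Near $u=1$ put $u=1+t$, so that
\[
f_\nu(1+t)=\frac{1}{(1+t)\prod_{j=2}^{\nu-1}(t-(j-1))}.
\]
The product $\prod_{j=2}^{\nu-1}(t-(j-1))=\prod_{i=1}^{\nu-2}(t-i)$ equals $(-1)^{\nu-2}\prod_{i=1}^{\nu-2} i\,(1-t/i)=(-1)^{\nu}(\nu-2)!\prod_{i=1}^{\nu-2}(1-t/i)$. Hence
\[
f_\nu(1+t)=\frac{(-1)^\nu}{(\nu-2)!}\cdot\frac{1}{(1+t)\prod_{i=1}^{\nu-2}(1-t/i)},
\]
and for $|t|<1$ (the disc of analyticity noted after \eqref{def-f}) we may take logarithms:
\[
-\log(1+t)-\sum_{i=1}^{\nu-2}\log(1-t/i)=\sum_{r\ge1}\frac{(-1)^r t^r}{r}+\sum_{r\ge1}\frac{t^r}{r}H^{(r)}_{\nu-2}
=\sum_{r\ge1}\bigl((-1)^r+H^{(r)}_{\nu-2}\bigr)\frac{t^r}{r}.
\]

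Next we rewrite this in the exponential form required by \eqref{bell}. Since $t^r/r=(r-1)!\,t^r/r!$, the coefficient of $t^r/r!$ is exactly $c_r=(r-1)!\bigl((-1)^r+H^{(r)}_{\nu-2}\bigr)$. Therefore
\[
f_\nu(1+t)=\frac{(-1)^\nu}{(\nu-2)!}\exp\Bigl(\sum_{r\ge1}c_r\frac{t^r}{r!}\Bigr)=\frac{(-1)^\nu}{(\nu-2)!}\sum_{m\ge0}Y_m(c_1,\dots,c_m)\frac{t^m}{m!}.
\]
Comparing with Taylor's formula $f_\nu(1+t)=\sum_m f_\nu^{(m)}(1)t^m/m!$ gives \eqref{eq:fder}. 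The case $\nu=2$ has an empty product, so $H^{(r)}_0=0$ and we recover $f_2(u)=1/u$, consistent with the statement.

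The particular values then follow from $Y_0=1$, $Y_1=c_1$ and $Y_2=c_1^2+c_2$. Here $c_1=H_{\nu-2}-1$ and $c_2=1+H^{(2)}_{\nu-2}$, and $A_1=(-1)^\nu/(\nu-2)!$ was recorded after Lemma \ref{lem:pf}.

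The only step needing care is the sign bookkeeping in $\prod_{i=1}^{\nu-2}(t-i)$, and making sure the $\log(1+t)$ term contributes $(-1)^r$ rather than $(-1)^{r-1}$. The argument can be done entirely in $\mathbb{C}[[t]]$, so convergence is not an issue; analyticity on $|t|<1$ only confirms that the series identities hold as functions.
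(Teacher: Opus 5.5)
Your proposal is correct and follows essentially the paper's route: both write $f_\nu$ as $f_\nu(1)$ times the exponential of a logarithm whose Taylor coefficients at $u=1$ are $c_r/r!$, and then apply the exponential formula \eqref{bell}. The only difference is cosmetic. You factor out $f_\nu(1)=(-1)^\nu/(\nu-2)!$ first and expand $-\log(1+t)-\sum_i\log(1-t/i)$ directly in $t=u-1$. The paper instead differentiates $g=\log f_\nu$ termwise and evaluates $g^{(r)}(1)=c_r$; your normalization has the small side benefit of avoiding any branch choice for the logarithm.
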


\begin{proof}
	Write $f_\nu=e^{g}$ with $g=\log f_\nu$ on a neighbourhood of $u=1$ (any
	branch). Then 
	\begin{equation*}
		g^{\prime }(u)=-\frac1u-\sum_{j=2}^{\nu-1}\frac1{u-j},
	\end{equation*}
	hence for $r\ge1$ 
	\begin{equation*}
		g^{(r)}(u)=(-1)^{r}(r-1)!\biggl(u^{-r}+\sum_{j=2}^{\nu-1}(u-j)^{-r}\biggr)%
		,\quad g^{(r)}(1)=(r-1)!\biggl((-1)^{r}+\sum_{i=1}^{\nu-2}i^{-r}\biggr)=c_r .
	\end{equation*}
	By the exponential formula \eqref{bell} (equivalently, Fa\`a di Bruno for $%
	\exp\circ g$), 
	\begin{equation*}
		f_\nu^{(m)}=f_\nu\cdot Y_m(g^{\prime (m)})\quad\mbox{and}\quad f_\nu(1)=%
		\frac{1}{\prod\limits_{j=2}^{\nu-1}(1-j)}=\frac{(-1)^{\nu-2}}{(\nu-2)!}.
	\end{equation*}
	The special cases follow from $Y_1(c_1)=c_1$, $Y_2(c_1,c_2)=c_1^2+c_2$.
\end{proof}

\begin{theorem}
	\label{thm:taylor} Let $\nu\ge2$, $x\in\mathbb{C}$ and $n\ge \nu-1$. Then 
	\begin{equation}  \label{eq:taylor}
		\ell_n(x;\nu)=\frac{(-1)^{\nu}}{(\nu-2)!}(n)_{\nu-1}B_{n-\nu+1}(x)
		+(n)_{\nu}\sum_{m=1}^{n-\nu+1}\frac{f_\nu^{(m)}(1)}{m}\sum_{i=m-1}^{n-\nu}%
		\binom{n-\nu}{i}S(i,m-1)\,x^{\,n-\nu-i},
	\end{equation}
	with $f_\nu^{(m)}(1)$ given by \eqref{eq:fder}.
\end{theorem}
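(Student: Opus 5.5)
The plan is to use the factorization recorded just before Lemma \ref{lem:fder},
\[
G_\nu(w;x)=w^{\nu-1}\cdot\frac{we^{xw}}{e^{w}-1}\cdot f_\nu(e^{w}),
\]
and to expand the last factor by Taylor's formula for $f_\nu$ at $u=1$, with $u-1=e^w-1$. Since $f_\nu$ is analytic in $|u-1|<1$, we have
\[
f_\nu(e^w)=f_\nu(1)+\sum_{m\ge1}\frac{f_\nu^{(m)}(1)}{m!}\,(e^w-1)^m .
\]
This holds as analytic functions for $|e^w-1|<1$, so in a neighbourhood of $w=0$. It also holds as formal power series, because $(e^w-1)^m\in w^m\mathbb{C}[[w]]$ and so only finitely many terms contribute to each coefficient. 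This justification, that is, substituting $u=e^w$ into the Taylor series at $u=1$, is the only point needing care. I expect it to be routine for the reason just given.

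First I treat the $m=0$ term. It gives $f_\nu(1)\,w^{\nu-1}\,we^{xw}/(e^w-1)$, and by Lemma \ref{lem:fder} we have $f_\nu(1)=A_1=(-1)^\nu/(\nu-2)!$. Exactly as in the proof of Theorem \ref{thm:pf}, its coefficient of $w^n/n!$ is
\[
\frac{(-1)^\nu}{(\nu-2)!}\,(n)_{\nu-1}B_{n-\nu+1}(x),
\]
which is the first term of \eqref{eq:taylor}.

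Next I treat the terms with $m\ge1$. The key simplification is that the Bernoulli denominator cancels:
\[
\frac{we^{xw}}{e^w-1}(e^w-1)^m=w\,e^{xw}(e^w-1)^{m-1}.
\]
By \eqref{stirling},
\[
(e^w-1)^{m-1}=(m-1)!\sum_{i}S(i,m-1)\frac{w^i}{i!}.
\]
Hence the $m$-th term equals
\[
\frac{f_\nu^{(m)}(1)}{m}\;w^{\nu}\,e^{xw}\sum_i S(i,m-1)\frac{w^i}{i!},
\]
where the coefficient $f_\nu^{(m)}(1)/m$ comes from $(m-1)!/m!=1/m$. The product $e^{xw}\sum_i S(i,m-1)w^i/i!$ has coefficient of $w^k/k!$ equal to $\sum_i\binom{k}{i}S(i,m-1)x^{k-i}$. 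Multiplying by $w^\nu$ shifts $k=n-\nu$ and introduces the factor $n!/(n-\nu)!=(n)_\nu$. This gives the inner double sum of \eqref{eq:taylor}.

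Finally I settle the summation ranges. Since $S(i,m-1)=0$ unless $m-1\le i$, and $i\le n-\nu$, only $m\le n-\nu+1$ contributes, and $i$ runs from $m-1$ to $n-\nu$. For $n=\nu-1$ the second sum is empty (and $(n)_\nu=0$), so the formula reduces to $\ell_{\nu-1}(x;\nu)=(-1)^\nu(\nu-1)$, consistent with Theorem \ref{thm:pf}. Substituting $f_\nu^{(m)}(1)$ from \eqref{eq:fder} completes the statement.
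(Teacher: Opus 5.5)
Your proposal is correct and follows essentially the same route as the paper. Both factor $G_\nu(w;x)=w^{\nu-1}\cdot\frac{we^{xw}}{e^{w}-1}\cdot f_\nu(e^{w})$ and expand $f_\nu$ by Taylor's formula at $u=1$ (valid for $|w|<\log 2$); the $m=0$ term gives the Bernoulli part, and for $m\ge1$ the factor $e^{w}-1$ cancels, the Stirling generating function is applied with $(m-1)!/m!=1/m$, and the range of $m$ follows from $S(i,m-1)=0$ for $i<m-1$. Your extra checks (the formal power series justification and the case $n=\nu-1$) are consistent with the paper's argument.
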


\begin{proof}
	For $|w|<\log2$ we have $|e^{w}-1|<1$, so Taylor's formula for $f_\nu$ at $%
	u=1$ gives 
	\begin{align*}
		G_\nu(w;x)&=w^{\nu-1}\frac{we^{xw}}{e^{w}-1}\sum_{m\ge0}\frac{f_\nu^{(m)}(1)%
		}{m!}(e^{w}-1)^{m} \\
		&= f_\nu(1)\,w^{\nu-1}\frac{we^{xw}}{e^{w}-1}+w^{\nu}e^{xw}\sum_{m\ge1}\frac{%
			f_\nu^{(m)}(1)}{m!}(e^{w}-1)^{m-1}.
	\end{align*}
	The first summand contributes $f_\nu(1)(n)_{\nu-1}B_{n-\nu+1}(x)$, exactly
	as in the proof of Theorem \ref{thm:pf}. For the second, by \eqref{stirling} 
	\begin{equation*}
		\frac{1}{(m-1)!}(e^{w}-1)^{m-1}=\sum_i S(i,m-1)\frac{w^{i}}{i!}
	\end{equation*}
	and 
	\begin{equation*}
		\frac{1}{m!}(e^w-1)^{m-1}=\frac1m\cdot\frac{(e^w-1)^{m-1}}{(m-1)!},
	\end{equation*}
	so that 
	\begin{equation*}
		w^{\nu}e^{xw}\frac{(e^{w}-1)^{m-1}}{m!}=\frac{w^{\nu}}{m}\sum_{k\ge0}\Bigg(%
		\sum_{i=0}^{k}\binom ki S(i,m-1)x^{k-i}\Bigg)\frac{w^{k}}{k!},
	\end{equation*}
	and the coefficient of $w^{n}/n!$ is 
	\begin{equation*}
		\frac{(n)_{\nu}}{m}\sum_{i}\binom{n-\nu}{i}S(i,m-1)x^{n-\nu-i}.
	\end{equation*}
	As $S(i,m-1)=0$ for $i<m-1$, the terms with $m-1>n-\nu$ vanish, whence the
	range of~ $m$.
\end{proof}

We now make explicit the cases $n=\nu$ and $n=\nu+1$, whose form is not
apparent from \eqref{eq:main-pf}. Write $H=H_{\nu-2}$, $H^{(2)}=H^{(2)}_{%
	\nu-2}$.

\begin{corollary}
	\label{cor:small} For $\nu\ge2$, 
	\begin{align}
		\ell_{\nu-1}(x;\nu)&=(-1)^{\nu}(\nu-1),  \label{eq:lvm1} \\
		\ell_{\nu}(x;\nu)&=(-1)^{\nu}\nu(\nu-1)\left(x+H-\frac32\right),
		\label{eq:lv} \\
		\ell_{\nu+1}(x;\nu)&=(-1)^{\nu}\frac{(\nu+1)\nu(\nu-1)}{2}%
		\left(x^{2}+(2H-3)x+H^{2}-2H+H^{(2)}+\frac{13}{6}\right).  \label{eq:lvp1}
	\end{align}
	More generally, for $k\ge0$ the polynomial $\ell_{\nu+k}(x;\nu)$ has degree $%
	k+1$ in $x$, leading coefficient $(-1)^\nu\binom{\nu+k}{\nu-1}(\nu-1)$, and
	its coefficients lie in $\mathbb{Q}[H^{(1)}_{\nu-2},\dots,H^{(k+1)}_{\nu-2}]$%
	.
\end{corollary}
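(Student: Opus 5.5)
The plan is to specialize Theorem \ref{thm:taylor} to $n=\nu-1,\nu,\nu+1$, and for general $k$ to read off degree and coefficient structure from the same formula. For $n=\nu-1$ the inner sum over $m$ is empty (its upper limit is $n-\nu+1=0$). Only the Bernoulli term survives, giving $\frac{(-1)^\nu}{(\nu-2)!}(\nu-1)!\,B_0(x)=(-1)^\nu(\nu-1)$, which is \eqref{eq:lvm1} and agrees with Theorem \ref{thm:pf}.

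\textbf{The case $n=\nu$.} The Bernoulli term is $A_1\,\nu!\,B_1(x)/1!$ with $A_1=(-1)^\nu/(\nu-2)!$. Hence it equals $(-1)^\nu\nu(\nu-1)(x-\tfrac12)$. The sum over $m$ has only $m=1$, with inner sum $S(0,0)=1$. This gives $\nu!\,f_\nu'(1)=\nu!\,A_1(H-1)=(-1)^\nu\nu(\nu-1)(H-1)$ by Lemma \ref{lem:fder}. Adding the two pieces yields \eqref{eq:lv}.

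\textbf{The case $n=\nu+1$.} The Bernoulli term is $A_1(\nu+1)_{\nu}B_2(x)=(-1)^\nu\frac{(\nu+1)\nu(\nu-1)}{2}\cdot 2(x^2-x+\tfrac16)$, since $(\nu+1)_\nu/(\nu-2)!=(\nu+1)\nu(\nu-1)$. The sum has two terms. The term $m=1$ has inner sum $\sum_i\binom1i S(i,0)x^{1-i}=x$, so it contributes $(\nu+1)!A_1(H-1)x$. The term $m=2$ has inner sum $S(1,1)=1$, so it contributes $(\nu+1)!\tfrac12 A_1\big((H-1)^2+1+H^{(2)}\big)$. Factoring out $(-1)^\nu\frac{(\nu+1)\nu(\nu-1)}{2}$, the bracket becomes $2x^2-2x+\tfrac13+2(H-1)x+(H-1)^2+1+H^{(2)}$.

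The one place needing care is that this bracket is not yet the normalized form of \eqref{eq:lvp1}. Its $x^2$-coefficient is $2$ rather than $1$, which points to a factor-of-two bookkeeping issue between the prefactor and the bracket. I will reconcile it by recomputing $(n)_{\nu-1}/(\nu-2)!$ carefully. I will also check the result against Table \ref{tab:values}, e.g. $\nu=3$, $x=0$, where $H=1$, $H^{(2)}=1$ and $\ell_4(3)=-26$. If the printed constant $\tfrac{13}{6}$ and coefficient $2H-3$ do not match after this check, I will report the corrected polynomial obtained from Theorem \ref{thm:taylor}, since the structure of the computation is fixed.

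\textbf{General $k$.} Put $n=\nu+k$.
\begin{itemize}
\item The Bernoulli term $A_1(n)_{\nu-1}B_{k+1}(x)$ has degree $k+1$, with leading coefficient $A_1(\nu+k)_{\nu-1}=(-1)^\nu\binom{\nu+k}{\nu-1}(\nu-1)$.
\item In the sum, the inner polynomial $\sum_i\binom ki S(i,m-1)x^{k-i}$ has degree at most $k$. So the leading coefficient is exactly that of the Bernoulli term.
\item For the coefficient field, $f_\nu^{(m)}(1)/A_1=Y_m(c_1,\dots,c_m)$ with $m\le k+1$. Each $c_r$ lies in $\mathbb{Q}[H^{(r)}_{\nu-2}]$, and the Bell polynomials have integer coefficients. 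The remaining factors, $(n)_\nu$, $A_1$ and the Stirling and Bernoulli numbers, are rational multiples of the common prefactor.
\end{itemize}
Together these give the stated degree, leading coefficient, and membership in $\mathbb{Q}[H^{(1)}_{\nu-2},\dots,H^{(k+1)}_{\nu-2}]$.
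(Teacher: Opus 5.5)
The gap is the case $n=\nu+1$. As written, your computation does not prove \eqref{eq:lvp1}, and the factor-of-two discrepancy you flag comes from your own slip, not from the statement. Your cases $n=\nu-1$ and $n=\nu$ are correct, and so is the general-$k$ argument. They are essentially the paper's route, except that the paper reads off the degree and leading coefficient from the Appell expansion $\ell_n(x;\nu)=\sum_i\binom ni x^{n-i}\ell_i(\nu)$ rather than from the Bernoulli term of \eqref{eq:taylor}; that difference is inessential.

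The Bernoulli term in \eqref{eq:taylor} carries $(n)_{\nu-1}$. For $n=\nu+1$ this is $(\nu+1)_{\nu-1}=(\nu+1)!/2!$, not $(\nu+1)_\nu=(\nu+1)!$. Hence $A_1(\nu+1)_{\nu-1}=(-1)^\nu\frac{(\nu+1)\nu(\nu-1)}{2}$, and the Bernoulli part of your bracket is $x^2-x+\frac16$, not $2x^2-2x+\frac13$. Your own general-$k$ bullet, which uses $A_1(\nu+k)_{\nu-1}$, already gives the correct value at $k=1$.

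Your $m=1$ and $m=2$ contributions correctly use $(n)_\nu=(\nu+1)!$ and are fine. With the corrected Bernoulli part the bracket becomes
\[
x^2-x+\tfrac16+2(H-1)x+(H-1)^2+1+H^{(2)}=x^2+(2H-3)x+H^2-2H+H^{(2)}+\tfrac{13}{6},
\]
which is exactly \eqref{eq:lvp1}.

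The numerical check you propose confirms this. For $\nu=3$, $x=0$, $H=H^{(2)}=1$, the printed formula gives $-12\cdot\frac{13}{6}=-26=\ell_4(3)$, whereas your uncorrected bracket gives $-12\cdot\frac{7}{3}=-28$. So there is no ``corrected polynomial'' to report; your proof needs only this one factor fixed, after which the $n=\nu+1$ case goes through as in the paper.
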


\begin{proof}
	Formula \eqref{eq:lvm1} is part of Theorem \ref{thm:pf}. For $n=\nu$ formula %
	\eqref{eq:taylor} has the single term $m=1$, $i=0$, so 
	\[\ell_\nu(x;\nu)=\nu! 
	\,f_\nu(1)B_1(x)+\nu!\,f_\nu^{\prime}(1)=\nu!A_1\Bigl(x-\frac12+H-1\Bigr),\]
	which is \eqref{eq:lv} since $\nu!/(\nu-2)!=\nu(\nu-1)$. For $n=\nu+1$: $%
	(\nu+1)_{\nu-1}=(\nu+1)!/2$, $B_2(x)=x^2-x+\tfrac16$, and the double sum is $%
	f_\nu^{\prime }(1)\,x+\tfrac12 f_\nu^{\prime \prime }(1)$ (using $%
	S(0,0)=S(1,1)=1$, $S(1,0)=0$); so 
	\[
	\ell_{\nu+1}(x;\nu)=(\nu+1)!\,A_1\left[\frac12\Big(x^2-x+\frac16\Big) 
	+(H-1)x+\frac12\Big((H-1)^2+1+H^{(2)}\Big)\right],
	\]
	and $(\nu+1)!A_1/2=(-1)^\nu(\nu+1)\nu(\nu-1)/2$. For the last statement
	observe that $G_\nu(w;x)=e^{xw}G_\nu(w;0)$ gives $\ell_n(x;\nu)=\sum_{i}%
	\binom ni x^{n-i}\ell_i(\nu)$; the lowest non--vanishing $\ell_i(\nu)$ is $%
	i=\nu-1$, which gives the degree and the leading coefficient. The statement
	about the coefficients follows from \eqref{eq:taylor}, since $%
	\ell_{\nu+k}(x;\nu)$ only involves $f_\nu^{(m)}(1)$ with $m\le k+1$, and $%
	Y_m(c_1,\dots,c_m)\in\mathbb{Q}[H^{(1)}_{\nu-2},\dots,H^{(m)}_{\nu-2}]$ by %
	\eqref{eq:fder}.
\end{proof}

As a check, for $\nu=4$ ($H=3/2$, $H^{(2)}=5/4$) we get $\ell_4(4)=12%
\cdot0=0 $ and 
\begin{equation*}
	\ell_5(4)=30\cdot\Big(\frac94-3+\frac54+\frac{13}6\Big)=30\cdot\frac83=80,
\end{equation*}
in agreement with Table \ref{tab:values}; for $\nu=7$, 
\begin{equation*}
	\ell_7(7)=-42\Big(\frac{137}{60}-\frac32\Big)=-\frac{329}{10}.
\end{equation*}
The vanishing $\ell_4(4)=0$ is thus explained by $H_2=3/2$; since $H_m=3/2$
only for $m=2$ ($H_1=1$, $H_3={11}/{6}$, and for $m\ge4$ the denominator of $%
H_m$ is divisible by $4$, because the term $2^{-\lfloor\log_2 m\rfloor}$ is
the only one whose denominator carries the maximal power of $2$ and cannot
be cancelled), $\nu=4$ is the only $\nu\ge3$ for which $\ell_\nu(\nu)=0$.
This vanishing is not apparent from \eqref{eq:main-pf}.

\begin{remark}
	Comparing \eqref{eq:main-pf} and \eqref{eq:taylor} at $x=0$, $n=\nu+k$, and
	using $F_k(-1)=(-1)^k$ one obtains the identity 
	\begin{equation*}
		\sum_{\substack{ j=0  \\ j\ne1}}^{\nu-1}\frac{A_j}{(j-1)^{m+1}}=-\frac{%
			f_\nu^{(m+1)}(1)}{(m+1)!}\qquad(m\ge0),
	\end{equation*}
	which is the Taylor expansion of $(u-1)\sum_{j\ne1}A_j/(u-j)$ at $u=1$. It
	expresses, e.g.\ for $m=0$, the harmonic number $H_{\nu-2}$ as a signed
	binomial sum, 
	\begin{equation*}
		\sum_{j\ne1}\frac{(-1)^{j}}{j-1}\binom{\nu-1}{j}=(\nu-1)(H_{\nu-2}-1),
	\end{equation*}
	a formula of the type that appears in \cite{SimsekSurvey} in connection with
	the numbers $y_1(n,k;\lambda)$; we do not pursue this here.
\end{remark}

Since $G_\nu(w;x)=e^{xw}G_\nu(w;0)$, the polynomials $\ell_n(x;\nu)$ form an
Appell sequence, $\frac{\partial}{\partial x}\ell_n(x;\nu)=n\,\ell_{n-1}(x;%
\nu)$, and 
\begin{equation*}
	\ell_n(x+y;\nu)=\sum_k\binom nk\ell_k(x;\nu)y^{n-k}.
\end{equation*}
The behaviour under the forward difference $\Delta_x\varphi(x)=\varphi(x+1)-%
\varphi(x)$, which for the Bernoulli polynomials is $\Delta_xB_n(x)=nx^{n-1}$%
, is less immediate; here the function $f_\nu$ of \eqref{def-f} enters again.

\begin{proposition}
	\label{prop:diff} For $n\ge\nu\ge2$ and every $x$, 
	\begin{equation}  \label{eq:diff}
		\ell_n(x+1;\nu)-\ell_n(x;\nu)=(n)_{\nu}\sum_{m=0}^{n-\nu}f_\nu^{(m)}(1)%
		\sum_{i=m}^{n-\nu}\binom{n-\nu}{i}S(i,m)\,x^{\,n-\nu-i},
	\end{equation}
	and $\ell_n(x+1;\nu)=\ell_n(x;\nu)$ for $n\le \nu-1$. Consequently, for $N\in%
	\mathbb{N}$, 
	\begin{equation}  \label{eq:faulhaber}
		\sum_{m=0}^{n-\nu}f_\nu^{(m)}(1)\sum_{i=m}^{n-\nu}\binom{n-\nu}{i}%
		S(i,m)\sum_{r=0}^{N-1}r^{\,n-\nu-i}=\frac{\ell_n(N;\nu)-\ell_n(\nu)}{%
			(n)_{\nu}}.
	\end{equation}
\end{proposition}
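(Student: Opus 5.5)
The natural route is through generating functions. Since $G_\nu(w;x+1)-G_\nu(w;x)=(e^{w}-1)G_\nu(w;x)$, we have
\begin{equation*}
\sum_{n\ge0}\bigl(\ell_n(x+1;\nu)-\ell_n(x;\nu)\bigr)\frac{w^n}{n!}
=(e^{w}-1)\,\frac{w^{\nu}e^{xw}}{(e^{w})_{\nu}}
=w^{\nu}e^{xw}f_\nu(e^{w}),
\end{equation*}
using $f_\nu(u)=(u-1)/(u)_\nu$ from \eqref{def-f}. The singular factor $1/(e^{w}-1)$ is gone, and we are left with $w^\nu$ times an analytic function near $w=0$.

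The right-hand side is $O(w^{\nu})$, so every coefficient with $n\le\nu-1$ vanishes. This gives $\ell_n(x+1;\nu)=\ell_n(x;\nu)$ for $n\le\nu-1$, consistent with Theorem \ref{thm:pf}.

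For $n\ge\nu$, expand $f_\nu$ in its Taylor series at $u=1$, exactly as in the proof of Theorem \ref{thm:taylor}. For $|w|<\log 2$,
\begin{equation*}
f_\nu(e^{w})=\sum_{m\ge0}f_\nu^{(m)}(1)\frac{(e^{w}-1)^m}{m!}
=\sum_{m\ge0}f_\nu^{(m)}(1)\sum_{i\ge m}S(i,m)\frac{w^i}{i!},
\end{equation*}
by \eqref{stirling}. Multiplying by $e^{xw}$ and forming the Cauchy product, the coefficient of $w^k/k!$ is $\sum_m f_\nu^{(m)}(1)\sum_i\binom{k}{i}S(i,m)x^{k-i}$. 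Multiplying by $w^\nu$ shifts the index: the coefficient of $w^n/n!$ is $(n)_\nu$ times the coefficient of $w^{n-\nu}/(n-\nu)!$. This gives \eqref{eq:diff}. The range $m\le n-\nu$ holds because $S(i,m)=0$ for $i<m$.

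The only point needing care is the interchange of the $m$-sum with coefficient extraction. This is legitimate because $(e^w-1)^m\in w^m\mathbb{C}[[w]]$, so each coefficient receives only finitely many contributions. Equivalently, the series converge absolutely for $|w|<\log 2$, since $f_\nu$ is analytic in $|u-1|<1$.

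For \eqref{eq:faulhaber}, set $x=r$ in \eqref{eq:diff} and sum over $r=0,\dots,N-1$. The left side telescopes to $\ell_n(N;\nu)-\ell_n(0;\nu)=\ell_n(N;\nu)-\ell_n(\nu)$. On the right we interchange the finite sums and divide by $(n)_\nu\neq0$, which is valid since $n\ge\nu$. We use the convention $0^0=1$ for the term $i=n-\nu$, $r=0$, which matches the polynomial identity evaluated at $x=0$.

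No step is a real obstacle. The one thing to watch is keeping that $0^0$ convention consistent.
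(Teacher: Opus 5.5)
Your proposal is correct and follows the paper's proof essentially step for step. The paper also writes $G_\nu(w;x+1)-G_\nu(w;x)=w^{\nu}e^{xw}f_\nu(e^{w})$, expands $f_\nu$ in its Taylor series at $u=1$, uses the Stirling generating function for $e^{xw}(e^{w}-1)^{m}/m!$, reads off the coefficient of $w^n/n!$, and then sums over $x=0,\dots,N-1$. Your remarks on the vanishing for $n\le\nu-1$, on the interchange of sums, and on the convention $0^0=1$ make explicit points that the paper leaves tacit.
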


\begin{proof}
	We have 
	\begin{equation*}
		G_\nu(w;x+1)-G_\nu(w;x)=w^{\nu}e^{xw}\frac{e^{w}-1}{(e^{w})_{\nu}}%
		=w^{\nu}e^{xw}f_\nu(e^{w})=w^{\nu}e^{xw}\sum_{m\ge0}\frac{f_\nu^{(m)}(1)}{m!}%
		(e^{w}-1)^{m}
	\end{equation*}
	and 
	\begin{equation*}
		\frac{(e^{w}-1)^{m}}{m!}e^{xw}=\sum_k\biggl(\sum_i\binom ki S(i,m)x^{k-i}%
		\biggr)\frac{w^{k}}{k!}
	\end{equation*}
	by \eqref{stirling}; the coefficient of $w^{n}/n!$ is then read off as in
	the proof of Theorem \ref{thm:taylor}. Summing \eqref{eq:diff} over $%
	x=0,1,\dots,N-1$ gives \eqref{eq:faulhaber}.
\end{proof}

Formula \eqref{eq:faulhaber} is a Faulhaber--type identity: the left side is
a fixed linear combination (with harmonic--number coefficients) of the power
sums $\sum_{r<N}r^{k}$, $0\le k\le n-\nu$, and the right side is a
polynomial in $N$ of degree $n-\nu+1$ given by \eqref{eq:taylor}. For $n=\nu$
it degenerates to $f_\nu(1)\,N=(\ell_\nu(N;\nu)-\ell_\nu(\nu))/\nu!$, which
is \eqref{eq:lv}; for $n=\nu+1$ it reads 
\begin{equation*}
	\frac{1}{2}f_\nu(1)\,N(N-1)+f_\nu^{\prime }(1)\,N=\frac{1}{(\nu+1)!}\bigl(%
	\ell_{\nu+1}(N;\nu)-\ell_{\nu+1}(\nu)\bigr) ,
\end{equation*}
in agreement with \eqref{eq:lvp1}. Compare \cite{SSS2024}, where power sums
are treated by means of moment generating functions of discrete
distributions; Proposition \ref{prop:moments} below explains why the same
objects occur here.

\section{Recurrence relations}

\label{sec:rec}
As is the case with special polynomials, recurrence relations for the polynomials  $\ell_n(x;\nu)$ are of great importance. These relations enable the rapid and accurate computation of high-degree polynomials without the need to directly calculate cumbersome algebraic expansions. The motivation of this section is to give with the aid of generating function.

\begin{proposition}
	\label{prop:rec1} For $\nu\in\mathbb{N}$, $n\in\mathbb{N}_0$ and every $x$, 
	\begin{equation}  \label{eq:rec1}
		\sum_{k=0}^{n}\binom
		nk\ell_k(x;\nu+1)-\nu\,\ell_n(x;\nu+1)=n\,\ell_{n-1}(x;\nu),
	\end{equation}
	where $\ell_{-1}:=0$.
\end{proposition}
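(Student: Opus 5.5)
The proof works entirely at the level of generating functions. The starting point is the factorization
\[
(e^{w})_{\nu+1}=(e^{w})_{\nu}\,(e^{w}-\nu).
\]
Multiplying $G_{\nu+1}(w;x)$ by $e^{w}-\nu$ cancels the last factor of the denominator, which gives
\[
(e^{w}-\nu)\,G_{\nu+1}(w;x)=\frac{w^{\nu+1}e^{xw}}{(e^{w})_{\nu}}=w\,G_{\nu}(w;x).
\]
This is an identity of analytic functions near $w=0$, hence of formal power series. It is the additive counterpart of the raising identity \eqref{eq:raise} in Remark \ref{rem:raise}.

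Next we read off the coefficient of $w^{n}/n!$ on both sides.

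On the left, $e^{w}G_{\nu+1}(w;x)$ is the product of two exponential generating functions, $\sum_k w^k/k!$ and $\sum_k \ell_k(x;\nu+1)w^k/k!$. Its coefficient of $w^n/n!$ is therefore the binomial convolution $\sum_{k=0}^{n}\binom nk\ell_k(x;\nu+1)$. Subtracting $\nu G_{\nu+1}$ contributes $-\nu\,\ell_n(x;\nu+1)$.

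On the right, we have
\[
wG_\nu(w;x)=\sum_{m}\ell_m(x;\nu)\frac{w^{m+1}}{m!}.
\]
Its coefficient of $w^n/n!$ is $n!\,\ell_{n-1}(x;\nu)/(n-1)!=n\,\ell_{n-1}(x;\nu)$ for $n\ge1$. For $n=0$ it is $0$, which matches the convention $\ell_{-1}:=0$. Equating the two coefficients gives \eqref{eq:rec1}.

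No step is a genuine obstacle. The only points to check are that the identity holds for all $\nu\in\mathbb{N}$, including $\nu=1$, where $G_1(w;x)=we^{(x-1)w}$, and the edge case $n=0$. In the case $\nu=1$, $(e^w)_1=e^w$, so the factorization is still valid. As a sanity check against Table \ref{tab:values}: for $\nu=1$, $n=2$, $x=0$, the left side is $\ell_0(2)+2\ell_1(2)+\ell_2(2)-\ell_2(2)=0+2+0=2$, and the right side is $2\ell_1(1)=2$. The identities agree.
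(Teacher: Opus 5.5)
Your proposal is correct and follows the paper's proof exactly: the identity $(e^{w}-\nu)G_{\nu+1}(w;x)=wG_{\nu}(w;x)$ followed by comparing coefficients of $w^{n}/n!$. Your extra checks of the $\nu=1$ and $n=0$ edge cases are consistent with this argument.
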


\begin{proof}
	Clearly $(e^{w}-\nu)G_{\nu+1}(w;x)=wG_\nu(w;x)$. The coefficient of $w^n/n!$
	on the left is 
	\begin{equation*}
		\sum_k\binom nk\ell_k(x;\nu+1)-\nu\,\ell_n(x;\nu+1),
	\end{equation*}
	on the right it is $n\,\ell_{n-1}(x;\nu)$.
\end{proof}

\begin{proposition}
	\label{prop:rec2} For $\nu\in\mathbb{N}$, $n\in\mathbb{N}_0$, 
	\begin{equation}  \label{eq:rec2}
		\sum_{k=1}^{\nu}s(\nu,k)\sum_{m=0}^{n}\binom nm
		k^{\,n-m}\ell_m(x;\nu)=(n)_{\nu}\,x^{\,n-\nu},
	\end{equation}
	the right hand side being $0$ for $n<\nu$ and $\nu!$ for $n=\nu$, $x=0$.
\end{proposition}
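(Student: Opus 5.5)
The plan is to clear the denominator of \eqref{def-ell} and compare coefficients, in the same way as the proof of Proposition~\ref{prop:rec1}. By \eqref{stirling} with $x=e^{w}$,
\[
(e^{w})_{\nu}=\sum_{k=0}^{\nu}s(\nu,k)e^{kw}=\sum_{k=1}^{\nu}s(\nu,k)e^{kw},
\]
since $s(\nu,0)=0$ for $\nu\ge1$. This is why the sum in \eqref{eq:rec2} starts at $k=1$. Multiplying \eqref{def-ell} by this polynomial in $e^{w}$ gives the identity
\[
\sum_{k=1}^{\nu}s(\nu,k)e^{kw}G_{\nu}(w;x)=w^{\nu}e^{xw}.
\]
It holds as formal power series, and also analytically near $w=0$.

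Next I read off the coefficient of $w^{n}/n!$ on each side.

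\begin{itemize}
\item \emph{Left side.} For each $k$, the product $e^{kw}G_{\nu}(w;x)=\sum_{j}k^{j}w^{j}/j!\cdot\sum_{m}\ell_{m}(x;\nu)w^{m}/m!$ is a Cauchy product of exponential series. Its coefficient of $w^n/n!$ is $\sum_{m=0}^{n}\binom nm k^{n-m}\ell_{m}(x;\nu)$.
\item \emph{Right side.} We have $w^{\nu}e^{xw}=\sum_{i}x^{i}w^{i+\nu}/i!$. Its coefficient of $w^n/n!$ is $n!\,x^{n-\nu}/(n-\nu)!=(n)_{\nu}x^{n-\nu}$ for $n\ge\nu$, and $0$ for $n<\nu$.
\end{itemize}

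Equating the two coefficients gives \eqref{eq:rec2}. This also fits the convention $(n)_{\nu}=0$ for $n<\nu$.

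For the final remark: when $n=\nu$ the right side is $\nu!\,x^{0}=\nu!$ for every $x$, with the convention $x^0=1$, so in particular at $x=0$. When $n<\nu$ it vanishes.

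There is no real obstacle. The only points needing care are that $s(\nu,0)=0$ for $\nu\ge1$, which justifies dropping $k=0$, and the convention $0^0=1$ at $x=0$, $n=\nu$.
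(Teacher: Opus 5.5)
Your proposal is correct and is essentially the paper's proof: write $(e^{w})_{\nu}=\sum_{k}s(\nu,k)e^{kw}$, multiply $G_{\nu}(w;x)$ by it to get $w^{\nu}e^{xw}$, and compare the coefficients of $w^{n}/n!$. Your extra remarks on $s(\nu,0)=0$ and on reading $(n)_{\nu}x^{n-\nu}$ as $0$ for $n<\nu$ supply details the paper leaves implicit.
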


\begin{proof}
	By \eqref{stirling}, $(e^{w})_{\nu}=\sum_{k=0}^{\nu}s(\nu,k)e^{kw}$ (with $%
	s(\nu,0)=0$ for $\nu\ge1$), so that 
	\begin{equation*}
		\sum_{k=0}^{\nu} s(\nu,k)\,e^{kw}\,G_\nu(w;x)=w^{\nu}e^{xw};
	\end{equation*}
	expand both sides and compare coefficients.
\end{proof}

Either recurrence determines the numbers completely: \eqref{eq:rec1}
together with $\ell_n(x;1)=n(x-1)^{n-1}$ (a two--index scheme, which was
used for the numerical verification in this paper), and \eqref{eq:rec2} for
each fixed $\nu$ together with the initial value \eqref{eq:lvm1}. Note that
the coefficient of $\ell_n$ in \eqref{eq:rec2} is $\sum_k
s(\nu,k)=(1)_{\nu}=0$ for $\nu\ge2$, so that \eqref{eq:rec2} is really a
recurrence for $\ell_{n-1}$ with coefficient 
\begin{equation*}
	n\sum_k s(\nu,k)k=n\,\frac{d}{du}(u)_{\nu}\bigg|_{u=1}=n(-1)^{\nu}(\nu-2)!%
	\ne0.
\end{equation*}
For $\nu=2$ (where $s(2,1)=-1$, $s(2,2)=1$) relation \eqref{eq:rec2} reads 
\begin{equation*}
	\sum_{m=0}^{n}\binom nm\big(2^{n-m}-1\big)\ell_m(x;2)=(n)_{2}x^{n-2}.
\end{equation*}

\section{An interpolation function and geometric moments}

\label{sec:interp}

The Bernoulli polynomials are interpolated by the Hurwitz zeta function, $%
B_{k+1}(x)=-(k+1)\zeta (-k,x)$, and the Apostol--Bernoulli polynomials by
Lerch's transcendent \cite{Apostol1951}. The nodes $j\geq 2$ in %
\eqref{eq:G-split} correspond, as we now show, to Lerch's function at the
points $z=1/j$, and the node $j=0$ to the elementary function $(x-1)^{-s}$.
For the statement we take $x>1$ real, which keeps all powers unambiguous;
other choices of $x$ are discussed after the proof.

An interpolation function for the polynomials $\ell _{n}(x;\nu )$ was
introduced recently by Pokaz, Ribi\v{c}i\'{c} Penava and Simsek \cite%
{Peneva1}, namely%
\begin{equation}
	\begin{split}
		Z(s;x,\nu )& =-\frac{s(s+1)\cdots (s+\nu -1)}{(\nu -1)!}\sum_{m=0}^{\infty }%
		\frac{1}{\left( x-1+m\right) ^{s+\nu }} \\
		& \qquad \times \sum_{0\leq k_{1}\leq k_{2}\leq \cdots \leq k_{\nu -2}\leq m}%
		\frac{1}{1^{k_{1}}2^{k_{2}-k_{1}}3^{k_{3}-k_{2}}\cdots (\nu -1)^{m-k_{\nu -2}}}
	\end{split}
	\label{DM-1}
\end{equation}%
for $\nu \geq 3$, and $Z(s;x,2)=-s(s+1)\zeta (s+2,x-1)$, where $x>1$. The
series converges absolutely for $\operatorname{Re}(s+\nu )>1$; its analytic
continuation and its values at the negative integers are studied in \cite%
{Peneva1}. For $\nu =3$, for instance, the inner sum is $2-2^{-m}$ and%
\begin{equation*}
	Z(s;x,3)=-s(s+1)(s+2)\left( \zeta (s+3,x-1)-\frac{1}{2}\Phi \left( \frac{1}{2%
	},s+3,x-1\right) \right) .
\end{equation*}%
The function $Z_{\nu }(s;x)$ constructed in the following theorem is a
finite combination of the Hurwitz zeta function and of $\nu -2$ Lerch
functions; Proposition \ref{prop:PRS} shows that $Z(s;x,\nu )$ is an
elementary multiple of $Z_{\nu }(s+\nu ;x)$, so that the $(\nu -2)$--fold
sum in \eqref{DM-1} collapses to these $\nu -1$ functions.

\begin{theorem}
	\label{thm:interp} Let $\nu\ge2$ and $x>1$. Define, for $s\in\mathbb{C}$, 
	\begin{equation}  \label{def-Z}
		Z_\nu(s;x):=\frac{(-1)^{\nu-1}}{(\nu-1)!}(x-1)^{-s}+\frac{(-1)^{\nu-1}}{%
			(\nu-2)!}\zeta(s,x)-\sum_{j=2}^{\nu-1}\frac{(-1)^{\nu-1-j}}{j\cdot
			j!\,(\nu-1-j)!}\,\Phi\!\left(\frac1j,s,x\right).
	\end{equation}
	Then $Z_\nu(\,\cdot\,;x)$ is meromorphic on $\mathbb{C}$ with a single
	simple pole at $s=1$, of residue $(-1)^{\nu-1}/(\nu-2)!$, and 
	\begin{equation}  \label{eq:interp}
		\ell_n(x;\nu)=(n)_{\nu}\,Z_\nu(\nu-n;x)\qquad(n\ge \nu).
	\end{equation}
\end{theorem}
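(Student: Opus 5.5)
The plan is to match \eqref{def-Z} term by term with the partial fraction formula of Corollary \ref{cor:apostol}. Three classical facts do all the work. First, $(x-1)^{-s}$ is entire in $s$ and equals $(x-1)^{k}$ at $s=-k$. Second, the Hurwitz zeta function $\zeta(s,x)$ is meromorphic on $\mathbb{C}$ with a single simple pole at $s=1$ of residue $1$, and satisfies $\zeta(-k,x)=-B_{k+1}(x)/(k+1)$. Third, for $0<z<1$ (here $z=1/j$, $j\ge2$) the series $\Phi(z,s,x)$ converges for every $s$. Indeed $|z|<1$ and $x>1$, so the series converges locally uniformly on $\mathbb{C}$ and $\Phi(1/j,\cdot,x)$ is entire. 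The first two facts also follow from the Hermite/Hankel representations, but I would simply cite them.

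The analytic statement then follows at once. Only the Hurwitz term can have a pole, and the residue of $Z_\nu$ at $s=1$ is its coefficient $(-1)^{\nu-1}/(\nu-2)!$.

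The key step is the special values of the Lerch terms. At $s=-k$ with $k\in\mathbb{N}_0$, Lemma \ref{lem:fubini} (with $a=j$) gives the generating function
\[
\sum_{r\ge0} j^{-r}e^{(r+x)w}=\frac{e^{xw}}{1-e^{w}/j}=\frac{-j\,e^{xw}}{e^{w}-j}.
\]
This is valid for $\operatorname{Re} w<\log j$, so in particular near $w=0$. Comparing coefficients of $w^{k}/k!$, using locally uniform convergence to differentiate termwise, gives
\[
\Phi(1/j,-k,x)=\sum_r j^{-r}(r+x)^k .
\]
By \eqref{eq:lem-apostol} this equals $-\mathcal{B}_{k+1}(x;1/j)/(k+1)$. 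Equivalently it equals $\frac{j}{j-1}F_k\bigl(x;\frac1{j-1}\bigr)$, by \eqref{eq:AB-F}. This identification is the step I would check most carefully, since the series converges at $s=-k$ and no continuation is needed there.

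Now set $k=n-\nu$ and $s=\nu-n=-k$, and multiply \eqref{def-Z} by $(n)_\nu$, using $(n)_\nu/(k+1)=(n)_{\nu-1}$. The first term becomes $\frac{(-1)^{\nu-1}(n)_\nu}{(\nu-1)!}(x-1)^{n-\nu}$. The Hurwitz term becomes $\frac{(-1)^{\nu}(n)_{\nu-1}}{(\nu-2)!}B_{n-\nu+1}(x)$. The $j$-th Lerch term becomes
\[
\frac{(n)_{\nu-1}}{(\nu-1)!}\binom{\nu-1}{j}\frac{(-1)^{\nu-1-j}}{j}\mathcal{B}_{n-\nu+1}(x;1/j).
\]
Here I use $\frac{1}{j!(\nu-1-j)!}=\frac{1}{(\nu-1)!}\binom{\nu-1}{j}$. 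For $j=1$ the summand of \eqref{eq:cor-apostol} is $\frac{(n)_{\nu-1}}{(\nu-1)!}(\nu-1)(-1)^{\nu}B_{n-\nu+1}(x)$, which matches the Hurwitz term. The three pieces therefore reproduce \eqref{eq:cor-apostol} exactly, which proves \eqref{eq:interp}. The only real work is bookkeeping of signs, which I would cross-check numerically against Table \ref{tab:values}, e.g.\ at $\nu=3$, $n=4$, $x\to0$ by polynomiality.
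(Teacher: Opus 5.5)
Your proof is correct and is essentially the paper's argument. Both work node by node from the partial fraction decomposition: the $j=1$ node uses $\zeta(-k,x)=-B_{k+1}(x)/(k+1)$, the $j=0$ node gives the elementary term, and the nodes $j\ge2$ use the geometric series $e^{xw}/(e^{w}-j)=-\sum_{r\ge0}j^{-r-1}e^{(r+x)w}$ (valid for $\operatorname{Re}w<\log j$). The only difference is that the paper reads the Lerch values off directly from \eqref{eq:G-split} instead of detouring through the Apostol--Bernoulli form \eqref{eq:cor-apostol}. One small attribution slip: that geometric-series identity is elementary and is not what Lemma \ref{lem:fubini} states, since the lemma expands in powers of $e^{w}-1$.
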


\begin{proof}
	For $j\ge2$, $0<1/j<1$, and $\Phi(1/j,s,x)=\sum_r j^{-r}(r+x)^{-s}$
	converges absolutely and locally uniformly in $s\in\mathbb{C}$ (for $x>0$),
	so it is entire; $\zeta(s,x)$ is meromorphic with the only pole $s=1$,
	residue $1$; $(x-1)^{-s}$ is entire. This gives the analytic statement (note
	that the coefficient of $\zeta$ is $-A_1$).
	
	For \eqref{eq:interp}, fix $j\ge2$ and let $\operatorname{Re}w<\log j$. Then $%
	|e^{w}/j|<1$ and 
	\begin{equation*}
		\frac{e^{xw}}{e^{w}-j}=-\frac1j\cdot\frac{e^{xw}}{1-e^{w}/j}%
		=-\sum_{r=0}^{\infty}j^{-r-1}e^{(r+x)w}.
	\end{equation*}
	Every term is entire in $w$, and the series converges uniformly on compact
	subsets of $\{\operatorname{Re}w<\log j\}$, so we may differentiate termwise at $w=0$%
	: the coefficient of $w^{k}/k!$ is 
	\begin{equation*}
		-j^{-1}\sum_r j^{-r}(r+x)^{k}=-j^{-1}\Phi(1/j,-k,x).
	\end{equation*}%
	(Comparing with Lemma \ref{lem:fubini} one obtains $\Phi(1/j,-k,x)=(j/(
	j-1))F_k(x;{1}/(j-1))$, a formula of Euler type; compare \cite[Section 3]%
	{Boyadzhiev}.) Hence the coefficient of $w^n/n!$ in $w^{%
		\nu}A_je^{xw}/(e^{w}-j)$ is 
	\begin{equation*}
		-(n)_{\nu}\frac{A_j}{j}\Phi(1/j,\nu-n,x)
	\end{equation*}
	for $n\ge \nu$. The $j=0$ term is 
	\begin{equation*}
		A_0(n)_{\nu}(x-1)^{n-\nu}=A_0(n)_{\nu}(x-1)^{-(\nu-n)},
	\end{equation*}
	and the $j=1$ term is 
	\begin{equation*}
		A_1(n)_{\nu-1}B_{n-\nu+1}(x)=-A_1(n)_{\nu-1}(n-\nu+1)\zeta(%
		\nu-n,x)=-A_1(n)_{\nu}\zeta(\nu-n,x),
	\end{equation*}
	valid for $n-\nu+1\ge1$. Adding the three kinds of contributions and
	inserting the values of $A_0,A_1,A_j$ from Lemma \ref{lem:pf} gives %
	\eqref{eq:interp}.
\end{proof}

For $0<x<1$ the only change is in the $j=0$ term, which should be read as $%
A_0\cos(\pi s)\,(1-x)^{-s}$ (this agrees with $A_0(x-1)^{n-\nu}$ at $%
s=\nu-n\in\mathbb{Z}$); for $x=0$ one drops the term $r=0$ of the Lerch
series, i.e., replaces $\Phi(1/j,s,0)$ by $Li_s(1/j)$, and the $j=0$ term
becomes $A_0\cos(\pi s)$: 
\begin{equation}  \label{def-Z0}
	Z_\nu(s):=\frac{(-1)^{\nu-1}}{(\nu-1)!}\cos(\pi s)+\frac{(-1)^{\nu-1}}{%
		(\nu-2)!}\zeta(s)-\sum_{j=2}^{\nu-1}\frac{(-1)^{\nu-1-j}}{j\cdot
		j!\,(\nu-1-j)!}Li_s\!\left(\tfrac1j\right),
\end{equation}
and then $\ell_n(\nu)=(n)_{\nu}Z_\nu(\nu-n)$ for $n>\nu$, where $n=\nu$ is
excluded because the omitted term $r=0$ contributes $\sum_{r%
	\ge0}r^{0}j^{-r}=1+Li_0(1/j)$ and $B_1(0)\ne B_1(1)$; the value $%
\ell_\nu(\nu)$ is given by \eqref{eq:lv}. We checked \eqref{eq:interp} and %
\eqref{def-Z0} numerically for $2\le \nu\le6$ and several $n$ and $x$ with a
multiprecision implementation of $\zeta(s,x)$ and $\Phi$.

The values of $Z_\nu$ at $s=0$ and $s=-1$ are \eqref{eq:lv} and %
\eqref{eq:lvp1}. The same function is also defined at positive integers $%
s=k\ge2$, where $\zeta(k,x)$ and $\Phi(1/j,k,x)$ are functions of
polylogarithmic type. For example, at $x=2$, $\nu=3$ (where $A_0=1/2$, $%
A_1=-1$, $A_2=1/2$, $\zeta(2,2)=\zeta(2)-1$ and $\Phi(1/2,2,2)=4Li_2(1/2)-2$%
), 
\begin{equation*}
	Z_3(2;2)=\frac12+\zeta(2)-1-\frac14\biggl(4Li_2\Bigl(\frac12\Bigr)-2\biggr)=%
	\frac{\pi^2}{6}-Li_2\Bigl(\frac12\Bigr)=\frac{\pi^2}{12}+\frac12\log^2 2 ,
\end{equation*}
This indicates that the values of $Z_\nu$ at positive integers involve
constants of a different nature ($\log 2$, $\zeta(3)$, $Li_k(1/j)$) from
the rational values at non--positive integers. We have not investigated
whether these values have an interpretation in terms of the numbers $%
\ell_n(\nu)$.

The function $Z_{\nu }$ has, like the Hurwitz zeta function, an Euler--type
integral representation, in which the generating function \eqref{def-ell}
itself, evaluated at $-t$, appears under the integral sign. In this sense $%
Z_{\nu }$ is the Mellin transform of the function $\,G_{\nu }(-t;x)$.

\begin{proposition}
	\label{prop:mellin} For $\nu\ge2$, $x>1$ and $\operatorname{Re}s>1$, 
	\begin{equation}  \label{eq:mellin}
		Z_\nu(s;x)=\frac{1}{\Gamma(s)}\int_0^{\infty}\frac{t^{s-1}e^{-xt}}{%
			(e^{-t})_{\nu}}\,dt=\frac{(-1)^{\nu}}{\Gamma(s)}\int_0^{\infty}t^{s-1-\nu}%
		\,G_\nu(-t;x)\,dt .
	\end{equation}
\end{proposition}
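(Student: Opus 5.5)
The second equality is pure algebra. By \eqref{def-ell} with $w=-t$,
\[
G_\nu(-t;x)=(-t)^{\nu}e^{-xt}/(e^{-t})_\nu=(-1)^\nu t^\nu e^{-xt}/(e^{-t})_\nu ,
\]
so $t^{-\nu}G_\nu(-t;x)=(-1)^\nu e^{-xt}/(e^{-t})_\nu$. Multiplying by $(-1)^\nu$ gives the first integrand.

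For the first equality I will insert the partial fraction decomposition of Lemma \ref{lem:pf} with $u=e^{-t}$:
\[
\frac{e^{-xt}}{(e^{-t})_\nu}=\sum_{j=0}^{\nu-1}A_j\frac{e^{-xt}}{e^{-t}-j}.
\]
I then integrate each term against $t^{s-1}/\Gamma(s)$ separately, using only the classical formula $\int_0^\infty t^{s-1}e^{-at}\,dt=\Gamma(s)a^{-s}$ for $a>0$. The three kinds of terms are handled as follows.
\begin{itemize}
\item For $j=0$ the term is $A_0e^{-(x-1)t}$, which gives $A_0(x-1)^{-s}$; here $x>1$ is needed.
\item For $j=1$, write $\frac{e^{-xt}}{e^{-t}-1}=-\frac{e^{-xt}}{1-e^{-t}}=-\sum_{r\ge0}e^{-(r+x)t}$. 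This gives $-A_1\zeta(s,x)=\frac{(-1)^{\nu-1}}{(\nu-2)!}\zeta(s,x)$.
\item For $j\ge2$, write $\frac{e^{-xt}}{e^{-t}-j}=-\frac1j\sum_{r\ge0}j^{-r}e^{-(r+x)t}$, valid because $e^{-t}/j\le 1/j<1$ for $t\ge0$. This gives $-\frac{A_j}{j}\Phi(1/j,s,x)$. With $A_j=(-1)^{\nu-1-j}/(j!(\nu-1-j)!)$ these are exactly the Lerch terms of \eqref{def-Z}.
\end{itemize}
Summing the three contributions reproduces $Z_\nu(s;x)$.

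The only real point is justifying the termwise integration of these series; I expect this to be the main, though mild, obstacle. For $\operatorname{Re}s>1$ I will apply Fubini/Tonelli: the sum of absolute values is $\sum_r c_r\int_0^\infty t^{\sigma-1}e^{-(r+x)t}\,dt=\Gamma(\sigma)\sum_r c_r(r+x)^{-\sigma}$, where $\sigma=\operatorname{Re}s$ and $c_r\in\{1,j^{-r}\}$. This is finite because $\sigma>1$ and $x>0$; the hypothesis $\operatorname{Re}s>1$ is needed only for the $\zeta$ term.

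Each piece of the integrand is separately integrable, and the partial fraction identity holds pointwise for $t>0$. Indeed, $e^{-t}\in(0,1)$ avoids the nodes $0,1,\dots,\nu-1$, so the decomposition is legitimate on $(0,\infty)$. At $t=0$ the full integrand behaves like $A_1t^{s-2}$, which is integrable exactly when $\operatorname{Re}s>1$; at infinity it decays like $e^{-(x-1)t}$. This confirms the convergence of the integral in \eqref{eq:mellin} directly as a check.
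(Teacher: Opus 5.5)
Your proof is correct and follows the paper's argument: the partial fraction decomposition at $u=e^{-t}$, geometric-series expansion of the $j=1$ and $j\ge 2$ pieces into Hurwitz and Lerch integrals, and termwise integration. Your Tonelli bound makes the termwise integration more explicit than the paper does. One harmless slip: near $t=0$ the integrand behaves like $-A_1t^{s-2}$, not $A_1t^{s-2}$, but this does not affect the integrability argument.
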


\begin{proof}
	By Lemma \ref{lem:pf} with $u=e^{-t}$, 
	\begin{equation*}
		\frac{e^{-xt}}{(e^{-t})_{\nu}}=A_0e^{-(x-1)t}-\frac{A_1e^{-xt}}{1-e^{-t}}%
		-\sum_{j=2}^{\nu-1}\frac{A_j}{j}\cdot\frac{e^{-xt}}{1-e^{-t}/j}.
	\end{equation*}
	For $\operatorname{Re}s>1$ and $x>0$ one has the classical integrals $%
	\Gamma(s)\zeta(s,x)=\int_0^\infty t^{s-1}e^{-xt}(1-e^{-t})^{-1}dt$ and $%
	\Gamma(s)\Phi(z,s,x)=\int_0^\infty t^{s-1}e^{-xt}(1-ze^{-t})^{-1}dt$ ($|z|<1$%
	), obtained by expanding the geometric series and integrating termwise, and $%
	\int_0^\infty t^{s-1}e^{-(x-1)t}dt=\Gamma(s)(x-1)^{-s}$ for $x>1$. All three
	integrals converge absolutely (near $t=0$ the integrand of the first behaves
	like $t^{s-2}$), so the sum of the three is \eqref{def-Z}. The second form
	of \eqref{eq:mellin} is just $G_\nu(-t;x)=(-t)^{\nu}e^{-xt}/(e^{-t})_{\nu}$.
\end{proof}

From \eqref{eq:mellin} the standard Hankel--contour argument (as in the
proof of $\zeta(-n,x)=-\frac{B_{n+1}(x)}{n+1}$ in \cite[Chapter 12]%
{Apostol1976}) gives a second proof of Theorem \ref{thm:interp}: if $%
h(t)=\sum_{m\ge-1}h_mt^{m}$ near $t=0$ and $Z(s)=\Gamma(s)^{-1}\int_0^\infty
t^{s-1}h(t)dt$, then $Z$ continues meromorphically with the only possible
pole at $s=1$ with residue $h_{-1}$, and $Z(-m)=(-1)^{m}m!\,h_m$ for $m\in%
\mathbb{N}_0$. Here 
\begin{equation*}
	h(t)=\frac{e^{-xt}}{(e^{-t})_{\nu}}=(-1)^{\nu}\sum_{n\ge
		\nu-1}(-1)^{n}\ell_n(x;\nu)\,\frac{t^{\,n-\nu}}{n!},
\end{equation*}
so $h_{-1}=-\ell_{\nu-1}(x;\nu)/(\nu-1)!=(-1)^{\nu-1}/(\nu-2)!$ and $%
h_m=(-1)^{m}\ell_{m+\nu}(x;\nu)/(m+\nu)!$, i.e., $Z_\nu(-m;x)=m!\,\ell_{m+%
	\nu}(x;\nu)/(m+\nu)!$, which is \eqref{eq:interp}. The partial fraction
proof is more direct, but the integral representation \eqref{eq:mellin} does
not refer to the individual nodes and is the appropriate starting point for
treating nodes with multiplicity.

The integral \eqref{eq:mellin} also identifies $Z_{\nu }$ with the function %
\eqref{DM-1}. Let $h_{m}(y_{1},\dots ,y_{r})$ denote the complete homogeneous
symmetric polynomial of degree $m$, i.e.%
\begin{equation*}
	\prod_{i=1}^{r}\frac{1}{1-y_{i}z}=\sum_{m=0}^{\infty }h_{m}(y_{1},\dots
	,y_{r})z^{m}.
\end{equation*}

\begin{proposition}
	\label{prop:PRS} Let $\nu \geq 2$ and $x>1$. For $\operatorname{Re}s>1$,%
	\begin{equation}
		Z_{\nu }(s;x)=\frac{(-1)^{\nu -1}}{(\nu -1)!}\sum_{m=0}^{\infty }h_{m}\left(
		1,\frac{1}{2},\dots ,\frac{1}{\nu -1}\right) (m+x-1)^{-s},
		\label{eq:Zdirichlet}
	\end{equation}%
	and the inner sum in \eqref{DM-1} equals $h_{m}(1,\frac{1}{2},\dots ,\frac{1%
	}{\nu -1})$. Consequently%
	\begin{equation}
		Z(s;x,\nu )=(-1)^{\nu }\,s(s+1)\cdots (s+\nu -1)\,Z_{\nu }(s+\nu ;x),
		\label{eq:ZZ}
	\end{equation}%
	$Z(s;x,\nu )$ is an entire function of $s$, $Z(-n;x,\nu )=\ell _{n}(x;\nu )$
	for every $n\in \mathbb{N}_{0}$, and%
	\begin{equation}
		\begin{split}
			\sum_{m=0}^{\infty }h_{m}\left( 1,\frac{1}{2},\dots ,\frac{1}{\nu -1}\right)
			(m+x-1)^{-s}& =(x-1)^{-s}+(\nu -1)\zeta (s,x) \\
			& \qquad -\sum_{j=2}^{\nu -1}(-1)^{j}\binom{\nu -1}{j}\frac{1}{j}\,\Phi
			\left( \frac{1}{j},s,x\right) .
		\end{split}
		\label{eq:closed}
	\end{equation}
\end{proposition}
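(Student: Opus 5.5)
The plan is to start from the Mellin representation \eqref{eq:mellin} of Proposition \ref{prop:mellin} and expand the integrand as a Dirichlet-type series in $e^{-t}$. Writing $u=e^{-t}$, we have
\[
\frac{1}{(u)_\nu}=\frac{1}{u\prod_{j=1}^{\nu-1}(u-j)}
=\frac{(-1)^{\nu-1}}{(\nu-1)!}\,u^{-1}\prod_{j=1}^{\nu-1}\frac{1}{1-u/j}.
\]
The generating function of the $h_m$ then gives, for $0<u<1$ (so $|u/j|<1$ for all $j\ge1$),
\[
\frac{e^{-xt}}{(e^{-t})_\nu}=\frac{(-1)^{\nu-1}}{(\nu-1)!}\sum_{m\ge0}h_m\!\left(1,\tfrac12,\dots,\tfrac1{\nu-1}\right)e^{-(m+x-1)t}.
\]
The coefficients are positive and $h_m(1,\dots)\le\binom{m+\nu-2}{\nu-2}$ grows polynomially. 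For $x>1$ and $\operatorname{Re}s>1$, Tonelli/Fubini (dominated by the same series with $|t^{s-1}|$) permits termwise integration against $t^{s-1}/\Gamma(s)$. Each term gives $(m+x-1)^{-s}$, which is \eqref{eq:Zdirichlet}; absolute convergence of the resulting series needs only $\operatorname{Re}s>1$ since the $h_m$ are bounded by a polynomial of degree $\nu-2$ and in fact tend to a constant. Formula \eqref{eq:closed} is then just \eqref{def-Z} multiplied by $(-1)^{\nu-1}(\nu-1)!$, using $(\nu-1)!/(j\,j!(\nu-1-j)!)=\binom{\nu-1}{j}/j$ with sign $(-1)^{j}$.

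Next we identify the inner sum in \eqref{DM-1}. Set $k_0=0$ and $k_{\nu-1}=m$, and put $e_i=k_i-k_{i-1}\ge0$, so that $e_1+\cdots+e_{\nu-1}=m$. The summand is then $\prod_{i=1}^{\nu-1}i^{-e_i}$, because the exponent of $i$ is $k_i-k_{i-1}$. Nondecreasing chains correspond bijectively to compositions $(e_1,\dots,e_{\nu-1})$ of $m$ into nonnegative parts, so the sum is exactly the monomial expansion of $h_m(1,\frac12,\dots,\frac1{\nu-1})$. The check at $\nu=3$, where the sum is $2-2^{-m}$, agrees with this. Substituting into \eqref{DM-1}, the Dirichlet series equals $(-1)^{\nu-1}(\nu-1)!\,Z_\nu(s+\nu;x)$ for $\operatorname{Re}(s+\nu)>1$. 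Multiplying by $-s(s+1)\cdots(s+\nu-1)/(\nu-1)!$ gives \eqref{eq:ZZ}, first in that half-plane and then everywhere by analytic continuation, taking the right side as the definition.

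For entirety, Theorem \ref{thm:interp} says $Z_\nu(\cdot;x)$ has only a simple pole at $s=1$. Hence $Z_\nu(s+\nu;x)$ has only a simple pole at $s=1-\nu$, and this is cancelled by the factor $s+\nu-1$. The values at $s=-n$ are handled in two cases. For $n\ge\nu$, the product $(-n)(-n+1)\cdots(-n+\nu-1)=(-1)^\nu(n)_\nu$, so \eqref{eq:ZZ} together with \eqref{eq:interp} gives $\ell_n(x;\nu)$.

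The main obstacle is the remaining values $0\le n\le\nu-1$, where the rising product vanishes except through the pole. For $n\le\nu-2$ the product has a simple zero at $s=-n$ and $Z_\nu(\nu-n;x)$ is finite because $\nu-n\ge2$, so the value is $0$, matching Theorem \ref{thm:pf}. For $n=\nu-1$ we take the limit as $s\to1-\nu$. The factor $(s+\nu-1)Z_\nu(s+\nu;x)$ tends to the residue $(-1)^{\nu-1}/(\nu-2)!$, and the other factors $s(s+1)\cdots(s+\nu-2)$ at $s=1-\nu$ give $(-1)^{\nu-1}(\nu-1)!$. Including the sign $(-1)^\nu$, the total is $(-1)^{\nu}(-1)^{\nu-1}(-1)^{\nu-1}(\nu-1)!/(\nu-2)!=(-1)^\nu(\nu-1)$, which equals $\ell_{\nu-1}(x;\nu)$ by Theorem \ref{thm:pf}. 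This sign bookkeeping is the step to check most carefully.
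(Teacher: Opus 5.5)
Your proposal is correct and follows the paper's proof essentially step by step. You expand $e^{-xt}/(e^{-t})_\nu$ as $\frac{(-1)^{\nu-1}}{(\nu-1)!}\sum_m h_m e^{-(m+x-1)t}$ and integrate termwise in \eqref{eq:mellin}. You identify the chain sum with $h_m$ via $e_i=k_i-k_{i-1}$, extend \eqref{eq:ZZ} by analytic continuation, and evaluate at $s=-n$ in the three cases $n\ge\nu$, $n\le\nu-2$, $n=\nu-1$; your sign bookkeeping there is right.

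The only difference is cosmetic. The paper justifies termwise integration with the explicit bound $0\le h_m\le\prod_{j=2}^{\nu-1}(1-1/j)^{-1}=\nu-1$. That boundedness is exactly what you need: the binomial bound alone would only give convergence for $\operatorname{Re}s>\nu-1$, so your remark that the $h_m$ tend to a constant is doing the real work.
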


\begin{proof}
	For $t>0$,%
	\begin{equation*}
		(e^{-t})_{\nu }=e^{-t}\prod_{j=1}^{\nu -1}(e^{-t}-j)=(-1)^{\nu -1}(\nu
		-1)!\,e^{-t}\prod_{j=1}^{\nu -1}\left( 1-\frac{e^{-t}}{j}\right) ,
	\end{equation*}%
	and $\prod_{j=1}^{\nu -1}(1-e^{-t}/j)^{-1}=\sum_{m\geq 0}h_{m}(1,\frac{1}{2}%
	,\dots ,\frac{1}{\nu -1})e^{-mt}$ by the definition of $h_{m}$. Writing $%
	h_{m}$ as the sum of the monomials $1^{-e_{1}}2^{-e_{2}}\cdots (\nu
	-1)^{-e_{\nu -1}}$ over $e_{1}+\dots +e_{\nu -1}=m$ and putting $%
	k_{i}=e_{1}+\dots +e_{i}$ gives the inner sum of \eqref{DM-1}. Since $0\leq
	h_{m}\leq \prod_{j=2}^{\nu -1}(1-\frac{1}{j})^{-1}=\nu -1$, the series $%
	\sum_{m}h_{m}t^{s-1}e^{-(m+x-1)t}$ may be integrated termwise in %
	\eqref{eq:mellin} for $\operatorname{Re}s>1$, and \eqref{eq:Zdirichlet}
	follows. Comparing \eqref{eq:Zdirichlet} with \eqref{DM-1} gives %
	\eqref{eq:ZZ} for $\operatorname{Re}(s+\nu )>1$, hence for all $s$ by
	analytic continuation; the factor $s+\nu -1$ cancels the pole of $Z_{\nu
	}(s+\nu ;x)$ at $s=1-\nu $, so $Z(s;x,\nu )$ is entire. For $n\geq \nu $, %
	\eqref{eq:ZZ} at $s=-n$ is $(n)_{\nu }Z_{\nu }(\nu -n;x)=\ell _{n}(x;\nu )$
	by \eqref{eq:interp}; for $0\leq n\leq \nu -2$ the product $s(s+1)\cdots
	(s+\nu -1)$ vanishes at $s=-n$, and so does $\ell _{n}(x;\nu )$; for $n=\nu
	-1$ the vanishing factor $s+\nu -1$ meets the pole of $Z_{\nu }$, of residue $%
	(-1)^{\nu -1}/(\nu -2)!$, and the value is $(-1)^{\nu }\cdot (-1)^{\nu
		-1}(\nu -1)!\cdot (-1)^{\nu -1}/(\nu -2)!=(-1)^{\nu }(\nu -1)=\ell _{\nu
		-1}(x;\nu )$. Finally \eqref{eq:closed} is $(-1)^{\nu -1}(\nu -1)!$ times %
	\eqref{def-Z}.
\end{proof}

Thus the entire function $Z(s;x,\nu )$ interpolates all the polynomials $\ell
_{n}(x;\nu )$, $n\geq 0$, while $Z_{\nu }$ does so for $n\geq \nu $ up to the
factor $(n)_{\nu }$, in the same way as $\zeta (s,x)$ interpolates $%
B_{k+1}(x)$ up to the factor $-(k+1)$. The case $\nu =2$ of \eqref{eq:ZZ} is $%
Z(s;x,2)=s(s+1)Z_{2}(s+2;x)$ with $Z_{2}(s;x)=-\zeta (s,x-1)$.

We now turn to the probabilistic interpretation. Recall \cite%
{SimsekSurvey,SimsekKilar2025} that $X$ is geometric with parameter $%
p\in(0,1)$ if $P(X=r)=pq^{r-1}$, $r\in\mathbb{N}$, $q=1-p$, and that its
moments are 
\begin{equation}  \label{eq:moments}
	\mu_m(p,q)=E( X^{m})=\sum_{r\ge1}r^{m}pq^{r-1}=-\frac{p\,\mathcal{B}_{m+1}(1;q)}{m+1}
\end{equation}
(the last equality is from \cite{BSimsek2024}, see also \cite{SimsekSurvey};
it is equivalent to \eqref{eq:AB-F}). Let $X_j$, $j\ge2$, be geometric with $%
q_j=1/j$, $p_j=1-1/j$, and let $X_0:=0$ (the degenerate case corresponding
to $q=0$). Then for every $k\in\mathbb{N}_0$ and every $x$ 
\begin{equation*}
	E\big[(X_j+x-1)^{k}\big]=\sum_{r\ge1}(r+x-1)^{k}p_jq_j^{r-1}=p_j%
	\Phi(q_j,-k,x)=\frac{j-1}{j}\,\Phi\!\left(\frac1j,-k,x\right),
\end{equation*}
and by the proof of Theorem \ref{thm:interp} the node $j$ contributes 
\begin{equation*}
	-(n)_{\nu}\frac{A_j}{j}\Phi\!\left(\frac1j,\nu-n,x\right)=-(n)_{\nu}\frac{A_j%
	}{j-1}E\big[(X_j+x-1)^{n-\nu}\big];
\end{equation*}
for $j=0$ the contribution 
\begin{equation*}
	A_0(n)_{\nu}(x-1)^{n-\nu}=-(n)_{\nu}\frac{A_0}{0-1}E[(X_0+x-1)^{n-\nu}]
\end{equation*}
has the same form. This proves the following statement.

\begin{proposition}
	\label{prop:moments} For $ n\ge\nu\ge2$ and $x\in\mathbb{C}$, 
	\begin{align}  \label{eq:prop-moments}
		\ell_n(x;\nu)=&\frac{(-1)^{\nu}}{(\nu-2)!}(n)_{\nu-1}B_{n-\nu+1}(x) \\
		&\qquad -(n)_{\nu}\sum_{\substack{ j=0  \\ j\ne1}}^{\nu-1}\frac{%
			(-1)^{\nu-1-j}}{j!\,(\nu-1-j)!\,(j-1)}\,E\!\left[(X_j+x-1)^{n-\nu}\right]. 
		\notag
	\end{align}
	In particular, at $x=1$ the numbers $\ell_n(1;\nu)$ are, up to the Bernoulli
	term, a fixed linear combination of the $(n-\nu)$-th moments of geometric
	random variables with success probabilities $\tfrac12,\tfrac23,\dots,\tfrac{%
		\nu-2}{\nu-1}$.
\end{proposition}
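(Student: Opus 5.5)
The plan is to start from Theorem \ref{thm:pf} and rewrite each Fubini term there as a geometric moment. Formula \eqref{eq:main-pf} already has exactly the shape of \eqref{eq:prop-moments}. The Bernoulli term is identical, since $(-1)^{\nu}/(\nu-2)!=A_1$. By Lemma \ref{lem:pf} the coefficient $A_j/(j-1)$ equals $(-1)^{\nu-1-j}/(j!\,(\nu-1-j)!\,(j-1))$. So it suffices to prove, for each $j\in\{0,2,3,\dots,\nu-1\}$ and $m=n-\nu\in\mathbb{N}_0$,
\begin{equation*}
F_{m}\Bigl(x;\frac{1}{j-1}\Bigr)=E\bigl[(X_j+x-1)^{m}\bigr].
\end{equation*}
Working through the Fubini form of Theorem \ref{thm:pf}, rather than the Lerch form of Theorem \ref{thm:interp}, avoids any restriction on $x$. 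Both sides are polynomials in $x$, so the identity holds for all complex $x$.

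For $j=0$ the identity follows from \eqref{fubini-special}: $F_m(x;-1)=(x-1)^m$, and $X_0=0$ gives the same value.

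For $j\ge2$ I will compute the moment generating function. For $|w|$ small,
\begin{equation*}
E\bigl[e^{(X_j+x-1)w}\bigr]=\sum_{r\ge1}p_jq_j^{r-1}e^{(r+x-1)w}=\frac{p_j e^{xw}}{1-q_je^{w}}=\frac{(j-1)e^{xw}}{j-e^{w}}.
\end{equation*}
By Lemma \ref{lem:fubini} with $a=j$, this equals $\sum_m F_m(x;1/(j-1))w^m/m!$. The only subtlety is justifying the interchange of expectation and power-series expansion. The series $\sum_r q_j^r e^{(r+x-1)|w|}$ converges absolutely for $|w|<\log j$, so Fubini/dominated convergence lets us identify the $m$-th Taylor coefficient with $E[(X_j+x-1)^m]$.

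An alternative route is the Euler-type formula recorded in the proof of Theorem \ref{thm:interp}, $\Phi(1/j,-k,x)=\frac{j}{j-1}F_k(x;\frac1{j-1})$, combined with $E[(X_j+x-1)^k]=\frac{j-1}{j}\Phi(1/j,-k,x)$. That formula was derived for $x>1$ real, however, so I prefer the direct MGF argument above, which is valid for every $x$.

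The final sentence of the proposition concerns $x=1$, where the summands become $E[X_j^{n-\nu}]$ with success probability $p_j=1-1/j$ for $j=2,\dots,\nu-1$, that is, $\tfrac12,\dots,\tfrac{\nu-2}{\nu-1}$. The $j=0$ term is $0^{n-\nu}$, a constant that vanishes unless $n=\nu$. I do not expect a real obstacle; the only care needed is the convergence justification at $j\ge2$ and checking the sign conventions in $A_j$.
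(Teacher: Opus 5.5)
Your proof is correct and is essentially the paper's argument. The paper reads off the node-$j$ contribution $-(n)_{\nu}\frac{A_j}{j}\Phi(\frac1j,\nu-n,x)$ from the proof of Theorem~\ref{thm:interp} and notes $E[(X_j+x-1)^k]=\frac{j-1}{j}\Phi(\frac1j,-k,x)$; you start from the Fubini form of Theorem~\ref{thm:pf} and identify $F_m(x;\frac1{j-1})$ with the moment through the moment generating function, which rests on the same geometric-series expansion of $e^{xw}/(e^{w}-j)$ (equivalently the Euler-type identity $\Phi(\frac1j,-k,x)=\frac{j}{j-1}F_k(x;\frac1{j-1})$ recorded in that proof). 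One minor point: for complex $x$ the dominating series should be $\sum_r q_j^{r}e^{|r+x-1|\,|w|}$, which still converges for $|w|<\log j$.
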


In \cite{Peneva}, Ribi\v{c}i\'{c} Penava, Pokaz and Simsek derived various
identities, finite sums and moment formulas involving the polynomials $\ell
_{n}(x;\nu )$, the Frobenius--Euler numbers, the Stirling numbers, the
Bernoulli polynomials and the Eulerian polynomials.

So the numbers $\ell _{n}(\nu )$ encode, in a single generating function,
the moments of a whole family of geometric distributions, in the same way as
the Fubini numbers encode the moments of the geometric distribution with $p=%
\frac{1}{2}$ (cf.\ \cite{SimsekKilar2025}). One may turn %
\eqref{eq:prop-moments} around and use it, together with \eqref{eq:taylor},
to express such moment combinations through Stirling and harmonic numbers;
we do not pursue this here.

\section{Asymptotics}
\label{sec:asymp}

It is well known that asymptotic analysis of special polynomials has many important applications. Asymptotic methods offer a powerful way to approximate the behavior of these polynomials when their degrees or variables reach very large values. Calculating the exact values of high-degree polynomials have significant mathematical challenges in applications of real-world problems. Therefore, asymptotic formulas allow complex expressions to be replaced by simple, highly accurate approximations. The aim of this section is to present asymptotic analyses and formulas for $\ell_n(x;\nu)$ polynomials.

Table \ref{tab:values} suggests that $|\ell_n(\nu)|$ grows roughly like $n!$
times a geometric factor, with a sign depending on $\nu$ only. This is made
precise by singularity analysis \cite[Ch.~IV]{FlajoletSedgewick}.

\begin{proposition}
	\label{prop:asymp} Let $\nu\ge3$ and $x\in\mathbb{C}$. Then, as $n\to\infty$%
	, 
	\begin{equation}  \label{eq:asymp}
		\ell_n(x;\nu)=(-1)^{\nu}\,\frac{2^{x}\,(\log2)^{\nu-n-1}}{4\,(\nu-3)!}\;n!\,%
		\big(1+O(\rho^{n})\big),\qquad \rho=%
		\begin{cases}
			\log2/\log3, & \nu\ge4, \\ 
			\log2/(2\pi), & \nu=3.%
		\end{cases}%
	\end{equation}
	For $\nu=2$, $\ell_n(2)=(-1)^{n-1}(n)_{2}+nB_{n-1}$ and the Bernoulli term
	dominates: $\ell_{2m+1}(2)\sim (-1)^{m-1}\frac{2\,(2m+1)!}{(2\pi)^{2m}}$, $%
	\ell_{2m}(2)=-(2m)_{2}$ exactly for $m\ge2$.
\end{proposition}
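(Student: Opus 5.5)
The plan is singularity analysis of the generating function $G_\nu(w;x)$ in \eqref{def-ell}, viewed as a meromorphic function of $w$. Its poles come from the zeros of $(e^w)_\nu=\prod_{j=0}^{\nu-1}(e^w-j)$, that is, from $e^w=j$ with $1\le j\le\nu-1$; the factor $j=0$ never vanishes. This gives the candidate points $w=\log j+2\pi i k$, $k\in\mathbb{Z}$. For $j=1$ the point $w=0$ is removable because of the numerator $w^\nu$, which leaves the poles $\pm2\pi i,\pm4\pi i,\dots$. For $\nu\ge3$ the pole of smallest modulus is therefore $w_0=\log 2$, of modulus $\log2<2\pi$, and it is simple. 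All other poles have modulus at least
\[
r_2=\min\bigl(\log3,\ |\log2\pm2\pi i|,\ 2\pi\bigr),
\]
where $\log 3$ only occurs if $\nu\ge4$. Since $|\log 2\pm 2\pi i|>2\pi>\log 3$, we get $r_2=\log3$ for $\nu\ge4$ and $r_2=2\pi$ for $\nu=3$. This is where the two values of $\rho=\log2/r_2$ in \eqref{eq:asymp} come from.

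Next I compute the residue at $w_0$. Near $w_0$ we have $e^w-2\sim2(w-\log2)$, and the remaining factors evaluated at $e^w=2$ give
\[
\prod_{j\ne2}(2-j)=2\cdot1\cdot(-1)\cdots(-(\nu-3))=2(-1)^{\nu-3}(\nu-3)!.
\]
Hence the residue is
\[
R=\frac{(\log2)^\nu 2^x}{4(-1)^{\nu-1}(\nu-3)!}.
\]
The polar part is
\[
\frac{R}{w-\log2}=-\frac{R}{\log2}\sum_n\Bigl(\frac{w}{\log 2}\Bigr)^n.
\]
Its coefficient of $w^n/n!$ is $-R\,n!\,(\log2)^{-n-1}$, which is exactly the main term $(-1)^\nu 2^x(\log2)^{\nu-n-1}n!/(4(\nu-3)!)$ in \eqref{eq:asymp}.

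For the error term, I subtract from $G_\nu$ the polar parts at $w_0$ and at all poles on the circle $|w|=r_2$; there are finitely many and all are simple. The remainder is analytic in a disc of radius $r_3>r_2$, so by Cauchy's estimate its Taylor coefficients are $O(n!\,r^{-n})$ for any $r<r_3$. Each subtracted pole on $|w|=r_2$ contributes $O(n!\,r_2^{-n})$. Dividing by the main term, which is of exact order $n!(\log 2)^{-n}$, gives the relative error $O((\log2/r_2)^n)=O(\rho^n)$. The main obstacle is the bookkeeping of the pole locations: checking that no pole from $j=1$ or from the shifted copies $\log j+2\pi ik$ lies strictly inside $|w|=r_2$, and confirming that $w=0$ is genuinely removable.

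For $\nu=2$ I use Corollary \ref{cor:stirling}'s remark $\ell_n(2)=(-1)^{n-1}(n)_2+nB_{n-1}$. Since $B_{n-1}=0$ for even $n\ge4$ (odd index at least $3$), this gives $\ell_{2m}(2)=-(2m)_2$ exactly for $m\ge2$. For odd $n=2m+1$, the classical asymptotic $B_{2m}\sim(-1)^{m+1}2(2m)!/(2\pi)^{2m}$ gives the term $(2m+1)B_{2m}$, which grows like $(2m+1)!/(2\pi)^{2m}$ and so dominates the polynomial term $(n)_2$. This yields the stated equivalence, with sign $(-1)^{m-1}$.
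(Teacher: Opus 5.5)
Your proposal is correct and follows the paper's proof. It uses singularity analysis of $G_\nu(w;x)$ at the simple pole $w_0=\log 2$, the same residue $R$ and coefficient $-R\,n!\,(\log 2)^{-n-1}$, and, for $\nu=2$, the formula $\ell_n(2)=(-1)^{n-1}(n)_2+nB_{n-1}$ with the classical asymptotics of $B_{2m}$. Your extra step of also subtracting the finitely many simple poles on $|w|=r_2$ before applying Cauchy's estimate is a small sharpening: it gives the error $O(\rho^n)$ exactly as stated, whereas the paper removes only the pole at $\log 2$ and so obtains $O((\log 2/r)^n)$ for every $r<r_2$.
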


\begin{proof}
	$G_\nu(w;x)$ is meromorphic in $\mathbb{C}$, with poles at the solutions of $%
	e^{w}=j$, $j=1,\dots,\nu-1$, i.e., at $w=\log j+2\pi i k$, $k\in\mathbb{Z}$,
	except that $w=0$ is removable (the numerator $w^{\nu}$ kills the simple
	zero of $e^{w}-1$). For $\nu\ge3$ the node $j=2$ is present and the pole
	nearest to the origin is $w_0=\log2$, simple, with 
	\begin{equation*}
		Res_{w=\log 2}G_\nu(w;x)=\frac{(\log2)^{\nu}2^{x}}{\frac{d}{dw}(e^{w}-2)%
			\big|_{\log2}\prod_{j\ne2}(2-j)}=\frac{(\log2)^{\nu}2^{x}}{%
			2\cdot2\cdot1\cdot(-1)^{\nu-3}(\nu-3)!}=:R .
	\end{equation*}
	The next poles in modulus are $\log3$ (if $\nu\ge4$) and $\pm2\pi i$, $\log
	2\pm2\pi i$ (if $\nu=3$). Hence $G_\nu(w;x)-R/(w-\log2)$ is analytic in $%
	|w|<r$ for every $r<\log3$ (resp.\ $r<2\pi$), so its $n$-th Taylor
	coefficient is $O(r^{-n})$, while the $n$-th coefficient of 
	\begin{equation*}
		\frac{R}{w-\log2}=-\frac{R}{\log2}\sum_n\Bigl(\frac{w}{\log2}\Bigr)^n
	\end{equation*}
	is $-R(\log2)^{-n-1}$. Therefore 
	\begin{equation*}
		\frac{\ell_n(x;\nu)}{n!}=-R(\log2)^{-n-1}(1+O((\log2/r)^n)),
	\end{equation*}
	which is \eqref{eq:asymp}. For $\nu=2$ the poles are at $\pm2\pi ik$, $k\ne0$%
	, and the stated formula for $\ell_n(2)$ follows from \eqref{eq:cor-stirling}%
	, with $B_{2m}\sim(-1)^{m-1}2(2m)!/(2\pi)^{2m}$ and $B_{2m+1}=0$.
\end{proof}

For $\nu=4$, $n=9$ formula \eqref{eq:asymp} gives $9!\,(\log2)^{-6}/4\approx
8.18\cdot10^{5}$ against the exact $807036$; for $\nu=3$, $n=9$ it gives $%
-9!\,(\log2)^{-7}/4\approx-1.18\cdot 10^{6}$ against $-1179864$. This also
explains why the sign of $\ell_n(\nu)$ is eventually constant for $\nu\ge3$
whereas it alternates for $\nu\le2$.

\section{Arbitrary nodes}

\label{sec:nodes}

The arguments of Sections \ref{sec:pf}--\ref{sec:asymp} use the specific
nodes $0,1,\dots,\nu-1$ only through the values of $A_j$ and of the sums $%
H^{(r)}_{\nu-2}$. We therefore state the general result. Let $\mathbf{a}%
=(a_0,\dots,a_{\nu-1})$ be a vector of distinct complex numbers and put 
\begin{equation}  \label{def-ell-a}
	\frac{w^{\nu}e^{xw}}{\prod_{j=0}^{\nu-1}(e^{w}-a_j)}=\sum_{n=0}^{\infty}%
	\ell_n(x;\mathbf{a})\frac{w^{n}}{n!},\qquad A_j(\mathbf{a}):=\prod 
	_{\substack{ i=0  \\ i\neq j}}^{\nu-1}\frac{1}{a_j-a_i}.
\end{equation}

\begin{theorem}
	\label{thm:nodes} \emph{(i)} If $1\notin\{a_0,\dots,a_{\nu-1}\}$, then $%
	\ell_n(x;\mathbf{a})=0$ for $n<\nu$ and, for $n\ge \nu$, 
	\begin{equation*}
		\ell_n(x;\mathbf{a})=-(n)_{\nu}\sum_{j=0}^{\nu-1}\frac{A_j(\mathbf{a})}{a_j-1%
		}F_{n-\nu}\!\left(x;\frac{1}{a_j-1}\right) =(n)_{\nu-1}\sum_{j=0}^{\nu-1}%
		\frac{A_j(\mathbf{a})}{a_j}\mathcal{B}_{n-\nu+1}\!\left(x;\frac1{a_j}\right),
	\end{equation*}
	the second form requiring in addition $0\notin\{a_j\}$.
	
	\emph{(ii)} If $a_{j_0}=1$ for one index $j_0$, then $\ell_n(x;\mathbf{a})=0$
	for $n<\nu-1$, $\ell_{\nu-1}(x;\mathbf{a})=(\nu-1)!A_{j_0}(\mathbf{a})$, and
	for $n\ge \nu$ the formula of {\rm (i)} holds with the $j_0$-th term
	replaced by $A_{j_0}(\mathbf{a})(n)_{\nu-1}B_{n-\nu+1}(x)$.
	
	\emph{(iii)} If moreover $|a_j|>1$ for all $j\ne j_0$, then with 
	\begin{equation*}
		Z_{\mathbf{a}}(s;x):=-A_{j_0}(\mathbf{a})\zeta(s,x)-\sum_{j\ne j_0}\frac{A_j(%
			\mathbf{a})}{a_j}\Phi\!\left(\frac1{a_j},s,x\right)\qquad(x>0)
	\end{equation*}
	one has $\ell_n(x;\mathbf{a})=(n)_{\nu}Z_{\mathbf{a}}(\nu-n;x)$ for $n\ge
	\nu $.
\end{theorem}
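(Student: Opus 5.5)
The plan is to run the proofs of Theorem \ref{thm:pf}, Corollary \ref{cor:apostol} and Theorem \ref{thm:interp} again with general nodes. The first step is the partial fraction decomposition
\[
\frac{1}{\prod_{j=0}^{\nu-1}(u-a_j)}=\sum_{j=0}^{\nu-1}\frac{A_j(\mathbf{a})}{u-a_j}.
\]
This holds because the poles are simple (the nodes are distinct) and $A_j(\mathbf{a})$ is the residue at $u=a_j$, exactly as in Lemma \ref{lem:pf}. Substituting $u=e^{w}$ and multiplying by $w^{\nu}e^{xw}$ gives
\[
\sum_n\ell_n(x;\mathbf{a})\frac{w^n}{n!}=\sum_j A_j(\mathbf{a})\,w^{\nu}\,\frac{e^{xw}}{e^{w}-a_j}.
\]

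For (i), every $a_j\neq1$, so Lemma \ref{lem:fubini} applies to each term. The $j$-th term becomes $-\frac{A_j}{a_j-1}\sum_m F_m(x;\frac{1}{a_j-1})w^{m+\nu}/m!$, whose coefficient of $w^n/n!$ is $-\frac{A_j}{a_j-1}(n)_\nu F_{n-\nu}$ for $n\ge\nu$ and $0$ for $n<\nu$. For the second form, assuming also $a_j\ne0$, I use Lemma \ref{lem:apostol} instead. The coefficient is then $\frac{A_j}{a_j}(n)_\nu\frac{\mathcal B_{n-\nu+1}(x;1/a_j)}{n-\nu+1}=\frac{A_j}{a_j}(n)_{\nu-1}\mathcal B_{n-\nu+1}(x;1/a_j)$, using $(n)_\nu=(n)_{\nu-1}(n-\nu+1)$. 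Equivalently, this follows termwise from \eqref{eq:AB-F}.

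For (ii), I separate the $j_0$ term and write it as $A_{j_0}w^{\nu-1}\cdot\frac{we^{xw}}{e^w-1}=A_{j_0}\sum_m B_m(x)w^{m+\nu-1}/m!$. This contributes $A_{j_0}(n)_{\nu-1}B_{n-\nu+1}(x)$ for $n\ge\nu-1$. The other terms are handled as in (i) and start only at $n=\nu$. Hence $\ell_n=0$ for $n<\nu-1$, and $\ell_{\nu-1}=A_{j_0}(\nu-1)!B_0(x)=(\nu-1)!A_{j_0}$.

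For (iii), with $|a_j|>1$ for $j\ne j_0$, I follow the proof of Theorem \ref{thm:interp}. For $\operatorname{Re} w<\log|a_j|$ we have
\[
\frac{e^{xw}}{e^w-a_j}=-\sum_{r\ge0}a_j^{-r-1}e^{(r+x)w},
\]
and the series converges locally uniformly, so termwise differentiation at $w=0$ gives the coefficient $-a_j^{-1}\Phi(1/a_j,-k,x)$ of $w^k/k!$. Here $\Phi(1/a_j,s,x)$ is entire in $s$ because $|1/a_j|<1$ and $x>0$, and the non-positive powers $(r+x)^k$ are unambiguous for $x>0$. Hence the $j$-th term contributes $-(n)_\nu\frac{A_j}{a_j}\Phi(1/a_j,\nu-n,x)$. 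For the $j_0$ term I use $B_{k+1}(x)=-(k+1)\zeta(-k,x)$ with $k=n-\nu\ge0$, which gives $A_{j_0}(n)_{\nu-1}B_{n-\nu+1}(x)=-A_{j_0}(n)_\nu\zeta(\nu-n,x)$. Summing the contributions gives $(n)_\nu Z_{\mathbf a}(\nu-n;x)$.

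The only delicate step is justifying the Lerch expansion in (iii): it needs $|a_j|>1$ so that $w=0$ lies in the half-plane of convergence, and it needs a consistent reading of the complex $a_j^{-r}$. Since $r$ is an integer, $a_j^{-r}$ is unambiguous, so no branch issue arises. Everything else is coefficient extraction as in the earlier proofs.
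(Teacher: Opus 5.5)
Your proposal is correct and is essentially the paper's proof. The paper simply reruns the arguments of Theorems~\ref{thm:pf} and~\ref{thm:interp} with Lemma~\ref{lem:pf} replaced by the Lagrange partial fraction formula $1/\prod_j(u-a_j)=\sum_j A_j(\mathbf{a})/(u-a_j)$, and notes for (iii) that $|1/a_j|<1$ makes $\Phi(1/a_j,s,x)$ entire in $s$ and the expansion of $e^{xw}/(e^{w}-a_j)$ valid on $\operatorname{Re}w<\log|a_j|$; your write-up additionally spells out the coefficient extractions the paper leaves implicit, including the Apostol--Bernoulli form via Lemma~\ref{lem:apostol} as in Corollary~\ref{cor:apostol}.
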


\begin{proof}
	Identical to the proofs of Theorems \ref{thm:pf} and \ref{thm:interp}, with
	Lemma \ref{lem:pf} replaced by the Lagrange partial fraction formula $%
	1/\prod_j(u-a_j)=\sum_j A_j(\mathbf{a})/(u-a_j)$. In (iii), $|1/a_j|<1$
	ensures that $\Phi(1/a_j,s,x)$ is entire in $s$ and that the expansion 
	\begin{equation*}
		\frac{e^{xw}}{e^w-a_j}=-\sum_r a_j^{-r-1}e^{(r+x)w}
	\end{equation*}
	is valid for $\operatorname{Re}w<\log|a_j|$.
\end{proof}

\begin{example}\label{ex:nodes}
	(a) $\nu=1$, $\mathbf{a}=(\lambda)$: $\ell_n(x;(\lambda))=\lambda^{-1}
	\mathcal{B}_n(x;1/\lambda)$, the Apostol--Bernoulli polynomials; for $\lambda=1$
	the Bernoulli polynomials; for $\lambda=2$, $\ell_n(x;(2))=-nF_{n-1}(x;1)$,
	so the (two--variable) Fubini polynomials also belong to the family.
	
	(b) $\mathbf{a}=(1,-1)$: $w^{2}e^{xw}/(e^{2w}-1)=(w/2)\cdot(2w)e^{(x/2)(2w)}/(e^{2w}-1)$, so $\ell_n(x;(1,-1))=n\,2^{n-2}B_{n-1}(x/2)$;
	Theorem \ref{thm:nodes}(ii) gives the same thing in the form 
	\[\frac12(n)_{1}B_{n-1}(x)-(n)_{2}\cdot\frac{-1/2}{-2}F_{n-2}(x;-\frac12),\]
	and since 
	\[\frac{e^{xw}}{1+\frac12(e^w-1)}=\frac{2e^{xw}}{e^w+1},\]
	i.e., $F_n(x;-1/2)=E_n(x)$ (Euler polynomials), this is the classical relation 
	\begin{equation*}
		2^{n-2}B_{n-1}(x/2)=\frac12B_{n-1}(x)-\frac{n-1}{4}E_{n-2}(x).
	\end{equation*}
	
	(c) The numbers $Y_n(k,a)$ and polynomials $Q_n(x,k,a)$ of \cite{Simsek2021b}
	(see also \cite{SimsekSurvey}) are generated by $a\,t\,e^{(k+1+x)t}/\big(%
	(e^{(k+2)t}-1)(e^{kt}+1)\big)$. Since 
	\begin{equation*}
		e^{(k+2)t}-1=\prod_{\xi^{k+2}=1}(e^{t}-\xi)
	\end{equation*}
	and $e^{kt}+1=\prod_{\eta^{k}=-1}(e^{t}-\eta)$, the denominator is $%
	\prod(e^{t}-a_j)$ over the $\nu=2k+2$ roots $a_j$ of $u^{k+2}=1$ and $%
	u^{k}=-1$; these are pairwise distinct precisely when $k\not\equiv2\pmod4$
	(a common root would satisfy $u^2=-1$, forcing $4\mid k+2$). In that case
	exactly one node equals $1$, and 
	\begin{equation*}
		Q_n(x,k,a)=a\,\frac{n!}{(n+2k+1)!}\,\ell_{n+2k+1}\big(k+1+x;\mathbf{a}\big),
	\end{equation*}
	because dividing the generating function \eqref{def-ell-a} by $t^{\nu-1}$
	shifts the index by $\nu-1=2k+1$ and $\ell_m(\cdot;\mathbf{a})$ vanishes for 
	$m<\nu-1$ by (ii). Theorem \ref{thm:nodes} then expresses $Q_n(x,k,a)$
	through Bernoulli polynomials and two--variable Fubini polynomials at the
	complex arguments $1/(\xi-1)$, and (iii) is not applicable since $|a_j|=1$;
	the interpolating function is instead a combination of Lerch functions $%
	\Phi(\bar\xi,s,x)$ at roots of unity, i.e., of the kind that appears in the
	theory of twisted Bernoulli numbers and Dedekind sums \cite{SimsekSurvey}.
	We checked the displayed identity for $k=1$ (nodes $1,e^{\pm2\pi i/3},-1$)
	up to $n=5$; a general treatment of the case $k\equiv2\pmod 4$ (double node)
	would require partial fractions with a double pole and is not carried out
	here.
\end{example}

\section{Conclusion}

1. The two expansions of $1/(u)_{\nu}$ used here, at $u=\infty$ (partial
fractions, Section \ref{sec:pf}) and at $u=1$ (Section \ref{sec:harmonic}),
are the two natural ones. An expansion at $u=0$, i.e., in powers of $e^{w}$,
produces the complete homogeneous symmetric functions $%
h_m(1,\tfrac12,\dots,\tfrac1{\nu-1})$; as a series in $w$ around $w=0$ it
does not converge (its terms do not tend to zero), but for $w=-t<0$ it does,
and under the Mellin transform it is precisely the series \eqref{DM-1} of %
\cite{Peneva1} (Proposition \ref{prop:PRS}).

2. The recursion \eqref{eq:raise} and the harmonic form \eqref{eq:taylor}
both single out the node $j=1$; it would be interesting to know whether the $%
2$--adic (or $p$--adic) properties of $\ell_n(\nu)$, which are visible in
Table \ref{tab:values} (for $n\le9$ all $\ell_n(4)$ are integers and all $%
\ell_n(3)$ have denominator $1$ or $3$; for larger $n$ further primes enter
through the Bernoulli term of \eqref{eq:cor-stirling}, e.g.\ $\ell_{14}(3)$
has denominator $15$ and $\ell_{19}(4)$ has denominator $5$) can be
explained by a $p$--adic integral representation of the pieces $w/(e^{w}-j)$,
$j\ge2$, in the spirit of the $p$--adic integral methods discussed in \cite%
{SimsekSurvey}. The $j=1$ piece is the Volkenborn integral of $e^{xw}$, and
the pieces with $j\equiv1\pmod p$ can be treated by the $\lambda$--twisted
integral with $\lambda=1/j$, but the remaining values of $j$ do not seem to
be accessible in this way.

3. It is natural to ask for a $q$--analogue, replacing the nodes $j$ by $%
[j]_q$. Theorem \ref{thm:nodes} applies verbatim (the nodes $[j]_q$ are
distinct for generic $q$), so the only real question is what replaces the
harmonic numbers in Theorem \ref{thm:taylor}; presumably the $q$--harmonic
numbers $\sum_{i}1/[i]_q^{r}$, but this has not been verified.

4. We do not know a combinatorial interpretation of the (integer) numbers $%
\ell_n(4)$, nor of $(\nu-1)!\,\ell_n(\nu)$ in general. Proposition \ref%
{prop:moments} shows that they are signed combinations of moments of
geometric variables, which suggests looking for a signed enumeration of
sequences of Bernoulli trials, but we have no definite result in this
direction.
\smallskip

{\bf Acknowledgments.}
The work of the second author was supported in part by the Serbian Academy of Sciences and Arts ($\Phi$-96).

\end{document}